    \definecolor{awesome}{rgb}{1.0, 0.13, 0.32}
    \definecolor{Mulberry}{rgb}{0.66, 0.24, 0.58}
\numberwithin{equation}{section}
\author[S.\,van~Golden]{S.\,van~Golden}
\author[C.\,Kalle]{C.\,Kalle}
\author[S.\,Kombrink]{S.\,Kombrink}
\author[T.\,Samuel]{T.\,Samuel}
\address[S.\,van~Golden]{Institute of Mathematics, Universiteit Leiden, Netherlands and School of Mathematics, University of Birmingham, UK}
\address[C.\,Kalle]{Institute of Mathematics, Universiteit Leiden, Netherlands.}
\address[S.\,Kombrink]{School of Mathematics, University of Birmingham, UK}
\address[T.\,Samuel]{
\textls[-15]{Department of Mathematics and Statistics, University of Exeter, UK and School of Mathematics, University of Birmingham, UK}}
\thanks{\emph{Funding acknowledgements}. The \emph{Birmingham-Leiden Collaboration Fund}, and EPSRC grants EP/S02297X/1 and EP/Y023358/1.}
\newcommand{\diam}{\operatorname{diam}}
\renewcommand{\emph}{\textsl}
\renewcommand{\em}{\textsl}
\renewcommand{\textit}{\textsl}
\numberwithin{equation}{section}
\subjclass[2020]{28A80, 11A67, 11K55}
\keywords{Affine iterated function system, Lüroth expansion, restricted digit set}
\title{Dimensions of infinitely generated self-affine sets and restricted digit sets for signed Lüroth expansions}
    \newtheorem{theorem}{Theorem}[section]
    \newtheorem{prop}[theorem]{Proposition}
    \newtheorem{lemma}[theorem]{Lemma}
    \newtheorem{cor}[theorem]{Corollary}
\theoremstyle{definition}
\theoremstyle{remark}
    \newtheorem{remark}[theorem]{Remark}
    \newtheorem{ex}[theorem]{Example}
\Crefname{theorem}{Theorem}{Theorems}
\Crefname{prop}{Proposition}{Propositions}
\Crefname{lemma}{Lemma}{Lemma}
\Crefname{cor}{Corollary}{Corollaries}
\Crefname{defn}{Definition}{Definitions}
\Crefname{question}{Question}{Questions}
\Crefname{remark}{Remark}{Remarks}
\Crefname{ex}{Example}{Examples}
\begin{document}

\begin{abstract}
    For countably infinite IFSs on $\mathbb R^2$ consisting of affine contractions with diagonal linear parts, we give conditions under which the affinity dimension is an upper bound for the Hausdorff dimension and a lower bound for the lower box-counting dimension. Moreover, we identify a family of countably infinite IFSs for which the Hausdorff and affinity dimension are equal, and which have full dimension spectrum. The corresponding self-affine sets are related to restricted digit sets for signed L\"uroth expansions.
\end{abstract}

\maketitle

\section{\textbf{Introduction and statement of main results}}

The dimension theory of self-affine sets generated by finite iterated function systems (IFS) has been developed since the 1980s, when it was investigated for which types of sets the Hausdorff and box-counting dimensions coincide, see for example \cite{PhD_Bedford,MR0771063}. In 1988 Falconer \cite{falconer_1988} introduced the affinity dimension, a dimension formula which purely depends on the singular values of the linear parts of the affine maps in the IFS. It turns out that for finitely generated self-affine sets in $\mathbb{R}^{D}$, the affinity dimension is an upper bound for the upper box-counting dimension, which is known to be an upper bound for the Hausdorff dimension. Moreover, Falconer proved that the Hausdorff dimension is almost surely (with respect to the translation vectors of the affine maps in the IFS) equal to the minimum of $D$ and the affinity dimension. Fraser \cite{Fraser_2012} later introduced a modified affinity dimension and showed, for a class of finitely generated box-like self-affine sets satisfying the rectangular open set condition, that the modified affinity dimension coincides with the box-counting and packing dimensions. More recently, Morris \cite{Morris} gave a simple description of the affinity dimension of self-affine sets in case the linear parts of the contractions consist of diagonal and anti-diagonal matrices. In the diagonal case, under the condition that each of the canonical projections of the IFS is exponentially separated, Rapaport \cite{Rap_2023} showed that the Hausdorff dimension of the self-affine set coincides with the minimum of its affinity dimension and $D$. The authors of \cite{KR14,Jurga_dimension_spectrum} considered the affinity dimension for infinite affine IFSs that are \emph{irreducible}, meaning the linear parts of the affine maps do not all preserve a common proper non-trivial linear subspace. They showed that the Hausdorff and affinity dimensions of the corresponding self-affine sets coincide. Outside of these classes of self-affine sets the Hausdorff and affinity dimensions do not necessarily coincide. For instance the Hausdorff dimension of the self-affine set constructed by taking the cross product of the middle $\frac{1}{2}$-Cantor set and the middle $\frac{7}{8}$-Cantor set equals $\frac{3}{4}$, whereas its affinity dimension is $1$.\!\footnote{\,We thank Ian Morris for providing this example.}

Conformal infinite IFSs have been studied since the seminal work of Mauldin and Urba\'nski \cite{mauldin1996dimensions}. One of the differences between finite and infinite conformal IFSs, highlighted in \cite{mauldin1996dimensions}, is that even under the open set condition, the Hausdorff and box-counting dimensions of their limit sets need not be equal. Moreover, in contrast to the finite setting, the limit set of an infinite IFS need not be compact. 

In this article we generalise some of the above results to a class of non-irreducible non-conformal infinite IFSs, for which the projections are not necessarily exponentially separated. More precisely, for a countable collection $\{ L_i : i \in I\}$ of diagonal $2 \times 2$~matrices over $\mathbb{R}$ of the form
    \begin{align}\label{eq:Li}
        L_i = \begin{pmatrix}
            a_i & 0\\
            0 & b_i
        \end{pmatrix},
        \quad \lvert a_i \rvert, \lvert b_i \rvert\in (0,1),
    \end{align}
and for $r > 0$, we set
\begin{align}\label{eq:pressure}
    P_I(r) = 
        \begin{cases}
            \max \{\sum_{i\in I} \lvert a_i \rvert^r,  \sum_{i\in I} \lvert b_i \rvert^r \} & \text{if} \; 0 < r \leq 1,\\[0.25em]
            \max \{\sum_{i\in I} \lvert a_i \rvert \cdot \lvert b_i \rvert^{r-1},  \sum_{i\in I} \lvert b_i \rvert \cdot \lvert a_i \rvert^{r-1} \} & \text{if} \; 1 < r \leq 2,\\[0.25em]
            \sum_{i\in I} \lvert a_i\cdot b_i \rvert^{r/2} & \text{if} \; r > 2.
        \end{cases}
\end{align}
The \emph{affinity dimension} $d( L_{i} \ \vert \ i \in I)$ of $\{ L_i : i \in I \}$ is defined by
    \begin{align*}
        d( L_{i} \ \vert \ i \in I) = \inf \left\{ r >0 \, : \, \sum_{m \in \mathbb{N}} \sum_{u \in I^{m}} \varphi^{r}(L_{u}) < \infty \right\},
    \end{align*}
where $\varphi^r(L_{u}) = \varphi^r(L_{(u_1, \ldots, u_m)})$ is the singular value function of the matrix product $L_{u_1} \cdots L_{u_m}$, see \eqref{eq:singular-value-fcn}. For countably infinite alphabets $I$ we find the following analogue of \cite[Corollary~2]{Morris}.

\begin{theorem}\label{theorem:MorrisInfinite}
    Suppose we have a countable alphabet $I$ and a collection $\{L_i : i\in I\}$ of diagonal $2\times 2$~matrices, as given in \eqref{eq:Li}, with $\sup_{i\in I} \max\{ \lvert a_i \rvert, \lvert b_i \rvert \} < 1$, then
        \begin{align*}
            \inf \left\{ r > 0 \, : \, \sum_{m \in \mathbb{N}} \sum_{u \in I^{m}} \varphi^{r}(L_{u}) < \infty \right\}
            = \inf\{r> 0 : P_I(r) \leq 1\}.
        \end{align*}
\end{theorem}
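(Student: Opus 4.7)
The plan is to show, for each $r > 0$, that the inner sum $\sum_{u \in I^m} \varphi^r(L_u)$ is comparable to $P_I(r)^m$ up to a multiplicative constant independent of $m$. Once this comparison is established, the double sum $\sum_m \sum_{u \in I^m} \varphi^r(L_u)$ is finite if and only if $P_I(r) < 1$, and the claimed equality of infima follows from continuity and monotonicity of $r \mapsto P_I(r)$.

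I would exploit the fact that $L_{u_1} \cdots L_{u_m}$ is again diagonal, with entries $a_u = \prod_{j=1}^m a_{u_j}$ and $b_u = \prod_{j=1}^m b_{u_j}$, so $\varphi^r(L_u)$ depends only on $\lvert a_u \rvert$ and $\lvert b_u \rvert$, and I would treat the three regimes $0 < r \leq 1$, $1 < r \leq 2$, and $r > 2$ separately. The case $r > 2$ is immediate: $\varphi^r(L_u) = (\lvert a_u b_u \rvert)^{r/2}$ factorises across the coordinates of $u$, giving $\sum_{u \in I^m} \varphi^r(L_u) = P_I(r)^m$ exactly. For $0 < r \leq 1$, the identity $\max(\lvert a_u\rvert, \lvert b_u\rvert)^r \leq \lvert a_u\rvert^r + \lvert b_u\rvert^r \leq 2\max(\lvert a_u\rvert,\lvert b_u\rvert)^r$ reduces matters to the factorisable sums $(\sum_i \lvert a_i\rvert^r)^m$ and $(\sum_i \lvert b_i\rvert^r)^m$, each controlled by $P_I(r)^m$.

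The most delicate regime is $1 < r \leq 2$, where $\varphi^r(L_u) = \max(\lvert a_u\rvert, \lvert b_u\rvert) \cdot \min(\lvert a_u\rvert, \lvert b_u\rvert)^{r-1}$ and the natural partition of $I^m$ according to whether $\lvert a_u\rvert \geq \lvert b_u\rvert$ does not respect the product structure across letters of $u$. To bypass this, I would use the symmetric inequality $\varphi^r(L_u) \leq \lvert a_u\rvert \lvert b_u\rvert^{r-1} + \lvert b_u\rvert \lvert a_u\rvert^{r-1} \leq 2\,\varphi^r(L_u)$, which can be verified by dividing through by the dominant term and using $2-r \geq 0$. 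Unlike $\varphi^r(L_u)$ itself, the middle expression does factorise, yielding $(\sum_i \lvert a_i\rvert \lvert b_i\rvert^{r-1})^m + (\sum_i \lvert b_i\rvert \lvert a_i\rvert^{r-1})^m$, which is again comparable to $P_I(r)^m$.

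Finally, I would verify that $r \mapsto P_I(r)$ is continuous and non-increasing on $(0,\infty)$; continuity only needs to be checked at the breakpoints $r = 1$ and $r = 2$, where the three branches of \eqref{eq:pressure} agree by direct substitution, and monotonicity follows termwise from $\lvert a_i\rvert, \lvert b_i\rvert < 1$. This yields $\inf\{r > 0 : P_I(r) < 1\} = \inf\{r > 0 : P_I(r) \leq 1\}$ and closes the argument (the edge case $P_I \equiv \infty$, under which both infima are $\infty$, is handled trivially, and the hypothesis $\sup_i \max\{\lvert a_i\rvert,\lvert b_i\rvert\} < 1$ ensures $P_I(r) \to 0$ as $r \to \infty$ whenever $P_I$ is finite somewhere). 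I expect the main obstacle to be the mismatch between the partition by dominant coordinate and the product structure in the middle regime; the symmetric inequality above is designed precisely to sidestep it.
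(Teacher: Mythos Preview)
Your argument is correct and shares its core idea with the paper's proof: $\varphi^r(L_u)$ is the maximum of two expressions, each of which factorises over the letters of $u$, and replacing the maximum by their sum (losing at most a factor of $2$) gives $\sum_{u\in I^m}\varphi^r(L_u)\asymp P_I(r)^m$. The paper, following Morris, packages this more uniformly: it introduces a diagonal matrix $L^{(r)}$ whose two entries are precisely the two expressions, checks once per regime that $\varphi^r(L)=\lVert L^{(r)}\rVert$ (operator norm) and that $(LK)^{(r)}=L^{(r)}K^{(r)}$, and then the comparison with $P_I(r)^m$ falls out of the equivalence between the operator norm and the entrywise $\ell^1$-norm on $M_2(\mathbb{R})$ in one stroke. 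Your explicit case-by-case inequalities are more elementary and make the constants visible; the matrix formulation is tidier bookkeeping and suggests how to generalise. One minor point: continuity of $P_I$ within a branch is not automatic for infinite sums, but you do not actually need it --- strict monotonicity on the finiteness set, which follows termwise from $\lvert a_i\rvert,\lvert b_i\rvert\in(0,1)$, already forces $\inf\{r:P_I(r)<1\}=\inf\{r:P_I(r)\leq 1\}$.
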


Our next result compares different notions of dimension to the affinity dimension and identifies a large class of infinite IFSs for which the affinity dimension gives a lower bound for the lower box-counting dimension and an upper bound for the Hausdorff dimension.

\begin{theorem} \label{prop:lower bound}
    Let $I$ be a countably infinite alphabet and $F$ be the self-affine set of an IFS $\{A_i : i\in I\}$ on $[0,1]^2$, where each $A_i$ is an affine map with linear part $L_{i}$ as in \eqref{eq:Li}, and $\sup_{i\in I} \max\{ \lvert a_i \rvert, \lvert b_i \rvert \} < 1$. Then the following hold.
        \begin{enumerate}
            \item\label{item:prop:lower_bound_1}  $\dim_{\mathcal{H}}(F) \leq \min\{2,d(L_i \ \vert \ i \in I )\}$;
            \item\label{item:prop:lower_bound_2} If there exists a finite alphabet $I_1\subseteq I$ such that $\dim_B(F_{I_2}) = d(L_i \ \vert \ i \in I_2)$ for all finite alphabets $I_{2}$ with $I_1\subseteq I_2 \subseteq I$, where $F_{I_2}$ is the limit set  of $\{A_i : i\in I_2\}$, then $d(L_i \ \vert \ i\in I) \leq \underline{\dim}_{B}(F) \leq \overline{\dim}_{B}(F) = \dim_{P}(F)$.
        \end{enumerate}
\end{theorem}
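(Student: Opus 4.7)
For part (i), the bound $\dim_{\mathcal H}(F)\le 2$ is immediate from $F\subseteq [0,1]^2$, so I would focus on showing $\dim_{\mathcal H}(F)\le d := d(L_i\vert i\in I)$. My plan is to fix $r>d$, use the definition of the affinity dimension — namely $\sum_{m\in\mathbb N}\sum_{u\in I^m}\varphi^r(L_u)<\infty$, so that the inner sum tends to $0$ as $m\to\infty$ — and cover $F\subseteq \bigcup_{u\in I^m}A_u([0,1]^2)$ by rectangles whose largest side is at most $\rho^m$, where $\rho := \sup_i\max\{|a_i|,|b_i|\}<1$, and which therefore shrink uniformly in $m$. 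An elementary rectangle-into-squares subcover — using $\lceil \alpha_1(L_u)/\alpha_2(L_u)\rceil$ squares of side $\alpha_2(L_u)$, with the evident trivial estimate for $r\leq 1$ — contributes at most a dimensional constant times $\varphi^r(L_u)$ to the $r$-Hausdorff content; summing gives $\mathcal H^r(F)=0$, and letting $r\downarrow d$ yields the conclusion.

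For the lower bound in (ii), codes in $I_2^{\mathbb N}$ are also codes in $I^{\mathbb N}$, so $F_{I_2}\subseteq F$ for every finite $I_2\subseteq I$; the hypothesis then gives $\underline{\dim}_B(F)\ge \underline{\dim}_B(F_{I_2}) = d(L_i\vert i\in I_2)$ whenever $I_1\subseteq I_2\subseteq I$ is finite. I would then invoke the monotone convergence $P_{I_2}(r)\nearrow P_I(r)$ as $I_2\nearrow I$, together with \Cref{theorem:MorrisInfinite}, to deduce $\sup d(L_i\vert i\in I_2) = d$, and therefore $\underline{\dim}_B(F)\ge d$.

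For the equality $\overline{\dim}_B(F)=\dim_P(F)$ my plan rests on two ingredients. First I would establish a \emph{homogeneity} property for $F$: any $x\in F$ admits a code $(i_1,i_2,\ldots)\in I^{\mathbb N}$ and, since $\diam A_{i_1\cdots i_n}([0,1]^2)\to 0$, one can pick $n$ so that $A_{i_1\cdots i_n}([0,1]^2)\subseteq B(x,\varepsilon)$; as $A_{i_1\cdots i_n}$ is bi-Lipschitz on $[0,1]^2$ with $A_{i_1\cdots i_n}(F)\subseteq F$, this yields $\overline{\dim}_B(F\cap B(x,\varepsilon)) = \overline{\dim}_B(F)$. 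Next I would combine this with Tricot's identity $\dim_P(F) = \inf\{\sup_i \overline{\dim}_B(E_i) : F\subseteq\bigcup_i E_i\}$: the inequality $\dim_P(F)\le \overline{\dim}_B(F)$ is trivial, and for the converse, given any countable cover $F\subseteq\bigcup_i E_i$ with closed $E_i$, I would set $G_i = E_i\cap \overline F$ and apply Baire's theorem inside the complete metric space $\overline F$. The residual $\overline F\setminus\bigcup_i G_i$ lies in $\overline F\setminus F$, which has empty interior in $\overline F$ since $F$ is dense there, so some $G_i$ must contain a non-empty relative open set $V\subseteq\overline F$; then $V\cap F\neq\emptyset$, and homogeneity forces $\overline{\dim}_B(E_i)\ge \overline{\dim}_B(V\cap F) = \overline{\dim}_B(F)$. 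The main obstacle is precisely this last Baire step: the limit set of an infinite IFS is typically not closed, so the argument must be transported to $\overline F$ while carefully tracking how a cover of $F$ translates there — the key point being that $F$ is dense in $\overline F$, so the loss from passing to the closure is topologically negligible.
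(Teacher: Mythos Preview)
Your arguments for part~(i) and for the inequality $d(L_i \mid i \in I) \le \underline{\dim}_B(F)$ in part~(ii) are essentially the paper's: the rectangle-into-squares cover bounding $\mathcal H^r_\delta(F)$ by a constant times $\sum_{u \in I^m} \varphi^r(L_u)$, and the identification $d(L_i \mid i \in I) = \sup_{I' \subseteq I \text{ finite}} d(L_i \mid i \in I')$ via the monotone limit $P_{I'}(r) \nearrow P_I(r)$ together with \Cref{theorem:MorrisInfinite}, are exactly what the paper does.

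Where your plan diverges is the equality $\overline{\dim}_B(F) = \dim_P(F)$, and here there is a genuine gap. The paper simply invokes \cite[Theorem~3.1]{mauldin1996dimensions}; you attempt a direct argument via homogeneity and Baire. The inference ``$\overline F \setminus \bigcup_i G_i$ has empty interior in $\overline F$, so some closed $G_i$ must have nonempty relative interior'' is false: take $\overline F = [0,1]$, let $(q_i)$ enumerate the rationals in $[0,1]$, and set $G_i = \{q_i\}$; the residual is the irrationals, which has empty interior, yet every $G_i$ is nowhere dense. More tellingly, $F = \mathbb Q \cap [0,1]$ satisfies every hypothesis your argument actually uses---rational affine maps furnish bi-Lipschitz copies of $F$ inside $F \cap B(x,\varepsilon)$ for all $x \in F$ and $\varepsilon > 0$---yet $\dim_P(F) = 0 < 1 = \overline{\dim}_B(F)$. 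So homogeneity plus density of $F$ in $\overline F$ cannot force $\dim_P = \overline{\dim}_B$; the phrase ``topologically negligible'' is doing work it cannot do. What is missing is that $\overline F \setminus F$ be \emph{meagre} in $\overline F$ (for instance $F_\sigma$ with empty interior), so that its nowhere-dense closed pieces can be appended to the $G_i$ before Baire is applied. Establishing that meagreness is precisely where IFS structure beyond bi-Lipschitz self-embedding must enter, and this is what the Mauldin--Urba\'nski machinery supplies.
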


Let $\pi_1, \pi_2$ denote the canonical projections onto the first and second coordinate, respectively. The conditions in \Cref{prop:lower bound}\eqref{item:prop:lower_bound_2} hold, for example, when $F$ satisfies the rectangular open set condition (ROSC), see \Cref{s:sss}, and there exists a finite subalphabet $I_1\subseteq I$ such that either
    \begin{enumerate}
        \item $\dim_{B}(\pi_1(F_{I_1})) = \dim_{B}(\pi_2(F_{I_1})) = 1$, or
        \item $\dim_{B}(\pi_1(F_{I_1})) = 1$ and $\lvert a_i \rvert \geq \lvert b_i \rvert$ for each $i\in I$.
    \end{enumerate}
This is a consequence of \cite[Corollaries~2.6 and~2.7]{Fraser_2012} with the fact that if $I_{1} \subseteq I_{2} \subseteq I$, then $F_{I_{1}} \subseteq F_{I_{2}} \subseteq F_{I}$, and if $\{A_i : i\in I\}$ is an infinite IFS satisfying the ROSC, then $\{A_i : i\in I'\}$ satisfies the ROSC for $I' \subseteq I$.

Next we provide a family of planar self-affine sets for which we can simplify the affinity dimension even further. Considering this family is motivated by questions on number expansions with restrictions on their digits. A famous example of a restricted digit set is the middle third Cantor set, which is the set of numbers in $[0,1]$ that have a ternary expansion without the digit $1$. For non-integer base expansions results on restricted digit sets are considered, for example, in \cite{KSS95,PS95,Lal97,DK09}. For continued fractions, which have infinite digit sets, restricted digit sets have been extensively studied since the work of Jarnik \cite{Jar28} and Good \cite{Goo41}. The infinite IFSs we will be concerned with relate to another type of number expansions with an infinite digit set, namely \emph{Lüroth expansions} \cite{Luroth1883}. For $x \in (0,1]$, these are expressions of the form 
    \begin{align}\label{q:lurothexp}
        x = \sum_{n \in \mathbb{N}} \frac{1}{d_n(d_n-1)},
    \end{align}
where $d_n \in \mathbb{N}_{\geq 2}$ for $n \in \mathbb{N}$. There are many known results on Lüroth expansions, for instance, concerning level sets defined in terms of the frequencies of digits or sets of points with growth rate restrictions on the digits, see for example \cite{BI09,FLMW10,LWY18,AG21,Zho22,FZ23, BK23}. 

Lüroth expansions can be obtained from the infinite IFS $\{ h_{d} : d \in \mathbb{N}_{\geq 2} \}$ where $h_d:[0,1] \to [1/d, 1/(d-1)]$ is defined by $h_d(x) = (x+d)/(d(d-1))$. If $x$ has a Lüroth expansion as in \eqref{q:lurothexp} with digit sequence $(d_n)_{n \in \mathbb{N}}$, then
    \begin{align*} 
        x = \lim_{n \to \infty} h_{d_1} \circ h_{d_2} \circ \cdots \circ h_{d_n}(0).
    \end{align*}
Over the years several generalisations of the Lüroth number system have been proposed. In particular, the authors of  \cite{KKK90,KKK91} considered alternating Lüroth expansions, which are very similar to the ones in \eqref{q:lurothexp} but the terms in the series alternate in sign, hence the name. In \cite{BBDK1996} it was shown that alternating Lüroth expansions have better approximation properties than Lüroth expansions and a family of number systems was described that interpolate between the Lüroth and alternating Lüroth systems. The corresponding expansions, which we call \emph{signed Lüroth expansions}, are of the form 
    \begin{align}\label{q:genluroth}
        \sum_{n \in \mathbb{N}} (-1)^{\sum_{i=1}^{n-1}s_i} \frac{d_n-1+s_n}{\prod_{i=1}^n d_i(d_i-1)}, 
    \end{align}
where $s_n \in \{0,1\}$ and $d_n \in \mathbb{N}_{\geq 2}$ for $n \in \mathbb{N}$, and where we set $\sum_{i=1}^{0}s_i = 0$. In \cite{KM22b} it was shown that Lebesgue almost all numbers $x \in [0,1]$ have uncountably many different signed Lüroth expansions and a one-parameter family of number systems in $\mathbb{R}^{2}$ was introduced that generate, for each $x$, all possible signed Lüroth expansions.

The system from \cite{KM22b} is related to an infinite affine IFS as follows. For each parameter $p \in (0,1)$ consider the IFS $\{ A_{s,d}^p: (s,d) \in \{0,1\} \times \mathbb{N}_{\geq 2}  \}$ where $A_{s,d}^p : [0,1]^{2} \to [0,1]^{2}$ is defined, for $(w, x) \in [0,1]^{2}$, by 
    \begin{align}\label{q:Land}
        A_{s,d}^p (w, x) = (L_{s,d}^p (w,x)^{\top}+ v_{s,d}^p)^{\top}
        \quad \text{with} \quad
        L_{s,d}^p = \begin{pmatrix} 
            p^{1-s}(1-p)^s & 0\\
            0 & (-1)^s \tfrac{1}{d(d-1)}
        \end{pmatrix}
        \quad \text{and} \quad
        v_{s,d}^p = \begin{pmatrix} 
            sp\\
            \tfrac{1}{d-s}
            \end{pmatrix}.
    \end{align}
For $p\in (0,1)$ and $J\subseteq \{0,1\}\times \mathbb N_{\geq 2}$ we let $\mathcal F_J^p$ denote the self-affine set of the non-irreducible IFS $\{ A_{s,d}^p: (s,d) \in J  \}$. Approximate images of examples of the sets $\mathcal F_{\!J}^p$ for $p=\tfrac12$ are shown in Figure \ref{fig:the sets}.

If $x$ has an expansion of the form \eqref{q:genluroth} with sign sequence $(s_n)_{n \in \mathbb{N}} \in \{0,1\}^\mathbb{N}$ and digit sequence $(d_n)_{n \in \mathbb{N}} \in \mathbb{N}^{\mathbb{N}}_{\geq 2}$, then $x = \pi_2 (\lim_{n \to \infty} A_{s_1,d_1}^p \circ A_{s_2, d_2}^p \circ \cdots \circ A_{s_n,d_n}^p((0,0)))$. The maps $A_{0,d}^p$ correspond to the Lüroth system in the sense that for each digit sequence $(d_n)_{n \in \mathbb{N}}$,
    \begin{align*}
        \pi_2 \left(\lim_{n \to \infty} A_{0,d_1}^p \circ A_{0, d_2}^p \circ \cdots \circ A_{0,d_n}^p((0,0))\right)
        = \lim_{n \to \infty} h_{d_1} \circ h_{d_2} \circ \cdots \circ h_{d_n}(0).
    \end{align*}
Similarly, the maps $A_{1,d}^{p}$ correspond to the alternating Lüroth system from \cite{KKK90,KKK91}. The collection of all signed Lüroth expansions is then obtained from all possible compositions of the Lüroth and alternating Lüroth systems, and the parameter $p \in (0,1)$ governs the weight that is put on each of them, or the probability with which the maps $A_{s,d}^p$ are chosen in such a composition.

For $J \subseteq \{0,1\} \times \mathbb{N}_{\geq 2}$, the projection onto the second coordinate of the limit set of the IFS $\{ A_{s,d}^p : (s,d) \in J \}$ contains precisely those numbers $x \in (0,1]$ that have a signed Lüroth expansion in which only digits $(s,d) \in J$ occur. Thereby, selecting different sets $J$ corresponds to placing different restrictions on the digits in the expansions. In this article we examine the geometry of the \emph{restricted digit sets} 
    \begin{align}\label{q:defF1d}
        \begin{aligned}
            F_{\!J}
            &= \{ x \in (0,1] : x \; \text{has a signed Lüroth expansion with all digits in} \; J\}\\
            &= \left\{ x \in (0,1] : \; \text{there exists} \;((s_n,d_n))_{n \in \mathbb{N}} \in J^\mathbb{N} \; \text{with} \; x = \pi_2 \left(\lim_{n \to \infty} A_{s_1,d_1}^p \circ A_{s_2, d_2}^p \circ \cdots \circ A_{s_n,d_n}^p((0,0))\right)\right\},
        \end{aligned}
    \end{align}
as well as the geometry of the self-affine sets $\mathcal{F}_{\!J}^p$. Note, $F_{\!J}$ does not depend on the parameter $p$, which is why we have omitted it from the notation. Similar to \cite[Theorem~4.3]{BF2023}, in \Cref{theorem:1DinfiniteJ} we obtain expressions for the Hausdorff, upper box-counting and packing dimensions of $F_{\!J}$. Moreover, we use results from \cite{rempe2016non} to obtain an expression for the Hausdorff dimension of non-autonomous variants of $F_J$, where the set $J$ describing the restriction can change at each time step. We use these results in tandem with the results of \cite{Mar54} to show the following.

\begin{theorem}\label{theorem:2DF}
    Let $I_0$ and $I_1$ denote two non-empty subsets of $\mathbb{N}_{\geq 2}$ and let $J = (\{0\} \times I_0)\cup (\{1\}\times I_1)$.  For $p \in (0,1)$,
        \begin{align}\label{eq:dimesion_of_F_J_p}
            \dim_{\mathcal H}(\mathcal F_{\!J}^p) \geq 1 + \inf\left\{ r\in (0,1] :  \left(\sum_{d_0\in I_0} \left(\frac{1}{d_0(d_0-1)}\right)^r\right)^p \left(\sum_{d_1\in I_1} \left(\frac{1}{d_1(d_1-1)}\right)^r\right)^{1-p} \leq  1\right\}.
        \end{align}
    Further, if $I_0 = I_1 = I \subseteq \mathbb{N}_{\geq 2}$ and $p \in (0,1)$, then 
        \begin{align}\label{eq:dimesion_of_F_J_p_2}
            \dim_{\mathcal H}(\mathcal F_{\!J}^p)
                = d(L_{s,d}^p \ \vert \ (s,d)\in J)
                = 1 + \inf\left\{ r\in (0,1] : \sum_{d\in I} \left(\frac1{d(d-1)}\right)^r \leq 1 \right\}.
        \end{align}
   In particular, if $I$ is finite, then $\dim_{\mathcal{H}}(\mathcal F_{\!J}^p) = \dim_{P}(\mathcal F_{\!J}^p) = \dim_{B}(\mathcal F_{\!J}^p) = d(L_{s,d}^p \ \vert \ (s,d)\in J)$.
\end{theorem}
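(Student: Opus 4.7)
The plan is to establish the lower bound in \eqref{eq:dimesion_of_F_J_p} via Marstrand's slicing theorem applied to horizontal slices, to upgrade this to equality in \eqref{eq:dimesion_of_F_J_p_2} when $I_0=I_1=I$ by matching the slice bound with the affinity-dimension upper bound supplied by \Cref{prop:lower bound}\eqref{item:prop:lower_bound_1} and \Cref{theorem:MorrisInfinite}, and finally to invoke Falconer's classical bound for finite self-affine IFSs to handle the ``in particular'' claim.

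For the lower bound, first observe that since both $I_0$ and $I_1$ are non-empty, the first-coordinate IFS of $\{A_{s,d}^p:(s,d)\in J\}$ is the two-map self-similar system $\{w\mapsto pw,\; w\mapsto(1-p)w+p\}$ on $[0,1]$, whose attractor is $[0,1]$. Hence $\pi_1(\mathcal F_{\!J}^p)=[0,1]$, and the Bernoulli $(p,1-p)$-measure on $\{0,1\}^{\mathbb N}$ pushes forward to Lebesgue measure on $[0,1]$ under the first-coordinate coding. For Lebesgue-a.e.\ $w\in[0,1]$ the sign address $(s_n)_{n\in\mathbb N}$ is uniquely determined, and because the linear parts $L_{s,d}^p$ are diagonal the slice $\mathcal F_{\!J}^p\cap\pi_1^{-1}(w)$ coincides with the limit set of the non-autonomous infinite conformal IFS on $[0,1]$ whose step-$n$ alphabet is $I_{s_n}$ and whose contraction ratios are $(d(d-1))^{-1}$ for $d\in I_{s_n}$. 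The non-autonomous variant of \Cref{theorem:1DinfiniteJ}, obtained in the paper from \cite{rempe2016non}, identifies the Hausdorff dimension of this slice as the unique $r$ satisfying
\[
\lim_{n\to\infty}\frac{1}{n}\sum_{k=1}^{n}\log\xi_{s_k}(r)=0,\qquad \xi_i(r):=\sum_{d\in I_i}(d(d-1))^{-r}.
\]
The strong law of large numbers applied to the iid sequence $(s_n)$ converts this into $\xi_0(r)^{p}\xi_1(r)^{1-p}=1$, and Marstrand's slicing theorem \cite{Mar54} then delivers $\dim_{\mathcal H}(\mathcal F_{\!J}^p)\geq 1+r$, proving \eqref{eq:dimesion_of_F_J_p}.

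When $I_0=I_1=I$ the equation above reduces to $\xi(r)=1$, so the lower bound becomes $1+r^*$ with $r^*:=\inf\{r\in(0,1]:\xi(r)\leq 1\}$, independent of $p$. For the matching upper bound, \Cref{prop:lower bound}\eqref{item:prop:lower_bound_1} gives $\dim_{\mathcal H}(\mathcal F_{\!J}^p)\leq d(L_{s,d}^p\,\vert\,(s,d)\in J)$, which by \Cref{theorem:MorrisInfinite} equals $\inf\{r>0:P_J(r)\leq 1\}$. A direct computation from \eqref{eq:pressure} in the range $r\in(1,2]$ yields
\[
P_J(r)=\max\bigl\{\xi(r-1),\;(p^{r-1}+(1-p)^{r-1})\,\xi(1)\bigr\}.
\]
At $r=1+r^*$ the first term equals $\xi(r^*)=1$; for the second, the bound $(d(d-1))^{-1}\leq 1/2$ for $d\geq 2$ implies $(d(d-1))^{-r^*}\geq 2^{1-r^*}(d(d-1))^{-1}$, which summed over $I$ gives $\xi(1)\leq 2^{r^*-1}$, while $p^{r^*}+(1-p)^{r^*}\leq 2^{1-r^*}$; thus the second term is also at most $1$. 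For $r\in(1,1+r^*)$ the first term already exceeds $1$ because $\xi(r-1)>1$, and for $r<1$ one has $P_J(r)\geq p^r+(1-p)^r>1$. Continuity and strict monotonicity of $P_J$ in $r$ then force $d(L_{s,d}^p\,\vert\,(s,d)\in J)=1+r^*$, completing \eqref{eq:dimesion_of_F_J_p_2}. For finite $I$, the set $\mathcal F_{\!J}^p$ is compact and Falconer's classical upper bound for finite self-affine sets gives $\overline{\dim}_{B}(\mathcal F_{\!J}^p)\leq d(L_{s,d}^p\,\vert\,(s,d)\in J)$; combined with the standard inequalities $\dim_{\mathcal H}\leq\dim_{P}\leq\overline{\dim}_{B}$ and $\dim_{\mathcal H}\leq\underline{\dim}_{B}\leq\overline{\dim}_{B}$ and the equality $\dim_{\mathcal H}(\mathcal F_{\!J}^p)=d(L_{s,d}^p\,\vert\,(s,d)\in J)=1+r^*$ just established, all four dimensions coincide.

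The main technical hurdle is the slicing step: one must verify that for Lebesgue-a.e.\ $w$ the slice is a bona fide non-autonomous infinite IFS limit set whose Hausdorff dimension is delivered exactly (not merely bounded below) by the non-autonomous Bowen formula of \cite{rempe2016non} in our noncompact, infinite-alphabet setting, and that the iid structure of the sign sequence under the push-forward of Bernoulli measure is compatible with the relevant ergodic-theoretic identification of the pressure limit. The complementary elementary inequality $\xi(1)\leq 2^{r^*-1}$ is the crucial observation that prevents the anti-diagonal term $(p^{r-1}+(1-p)^{r-1})\xi(1)$ in $P_J$ from forcing a larger value of the affinity dimension than the slice-based lower bound.
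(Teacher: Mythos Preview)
Your overall strategy matches the paper's: fibre the set over the first coordinate, compute the Hausdorff dimension of almost every vertical slice via the non-autonomous Bowen formula of \cite{rempe2016non} together with the law of large numbers for the Bernoulli-$(p,1-p)$ sign sequence, apply Marstrand \cite{Mar54} for the lower bound, and match with the affinity-dimension upper bound from \Cref{prop:lower bound}\eqref{item:prop:lower_bound_1}. The gap is exactly the one you flag as the ``main technical hurdle'' and then leave open: the non-autonomous result you invoke (the paper's \Cref{theorem:Nonautonomous1D}, which feeds into \Cref{cor:fibresforsequences}) requires \emph{finite} step-alphabets of sub-exponential growth, so it does not apply directly when $I_0$ or $I_1$ is infinite. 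The paper resolves this not by extending the non-autonomous theory but by a simple approximation: for $n\geq 2$ set $I_{n,s}=I_s\cap\{2,\ldots,n\}$ and $J_n=(\{0\}\times I_{n,0})\cup(\{1\}\times I_{n,1})$, apply the finite case to get $\dim_{\mathcal H}(\mathcal F_{\!J_n}^p)\geq 1+t_n$, and use $\mathcal F_{\!J_n}^p\subseteq\mathcal F_{\!J}^p$ together with $\mathfrak p(I_{n,0},I_{n,1},\cdot)\uparrow\mathfrak p(I_0,I_1,\cdot)$ to pass to the supremum. This costs nothing and removes the hurdle entirely.

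For the affinity-dimension computation when $I_0=I_1=I$, your bound $\xi(1)\leq 2^{r^*-1}$ combined with $p^{r^*}+(1-p)^{r^*}\leq 2^{1-r^*}$ is correct but only controls $P_J$ at the single point $r=1+r^*$; the paper instead proves in \Cref{cor:affinityLuroth}(b) that the first term in the max dominates the second for \emph{every} $r\in[1,2]$, via the elementary inequality $n^{-(r-1)}\geq(p^{r-1}+(1-p)^{r-1})\,n^{-1}$ for $n=d(d-1)$ (obtained by maximising $q\mapsto q^{r-1}+(n-q)^{r-1}$ on $[0,n/2]$). This is cleaner and avoids any reliance on $\xi(r^*)=1$, which needs separate justification when $I$ is infinite. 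For the finite-$I$ ``in particular'' clause, the paper uses \Cref{lemma:modifiedLuroth} (Fraser's modified affinity dimension) to obtain $\dim_B=\dim_P=d(L_{s,d}^p\mid(s,d)\in J)$ directly; your route through Falconer's classical upper bound $\overline{\dim}_B\leq d$ also works and is a legitimate shortcut once $\dim_{\mathcal H}=d$ is in hand.
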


\Cref{theorem:2DF} shows that our family of examples includes finitely and infinitely generated planar self-affine sets that are not irreducible, whose canonical projections to the $x$-coordinate are not necessarily exponentially separated, and for which the Hausdorff and affinity dimensions coincide; complementing the work of \cite{Fraser_2012,KR14,Jurga_dimension_spectrum,Rap_2023}. In the finite case this implies that the Hausdorff, packing, box-counting and affinity dimensions all coincide. \Cref{fig:the sets} illustrates approximations of sets $\mathcal F_{\!J}^p$ with $J=\{0,1\} \times I$ for various sets $I$.

\begin{figure}[t!]
    \centering
    \null\hfill
    \begin{subfigure}{0.285\linewidth}
    \centering
        \includegraphics[width = \linewidth]{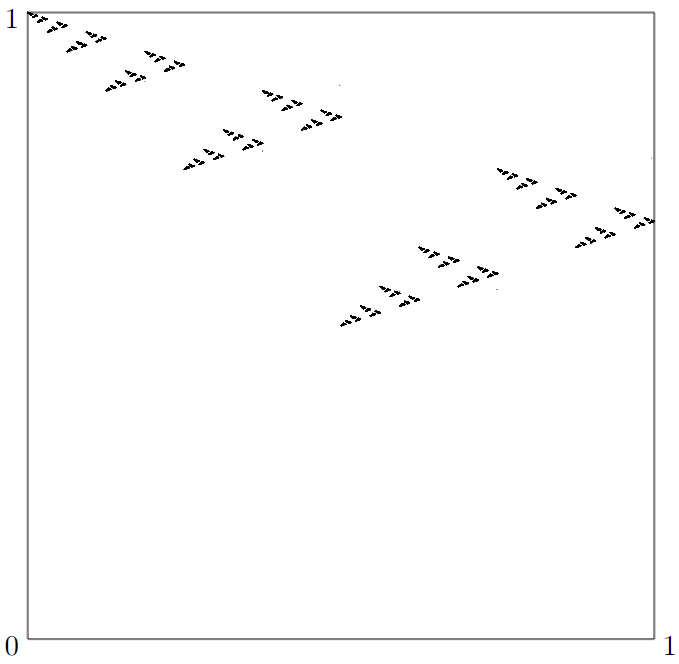}
        \caption{$J_1 = \{(0,2),(1,2)\}$}
    \end{subfigure}
    \hfill\null\hfill
    \begin{subfigure}{0.285\linewidth}
        \centering
        \includegraphics[width = \linewidth]{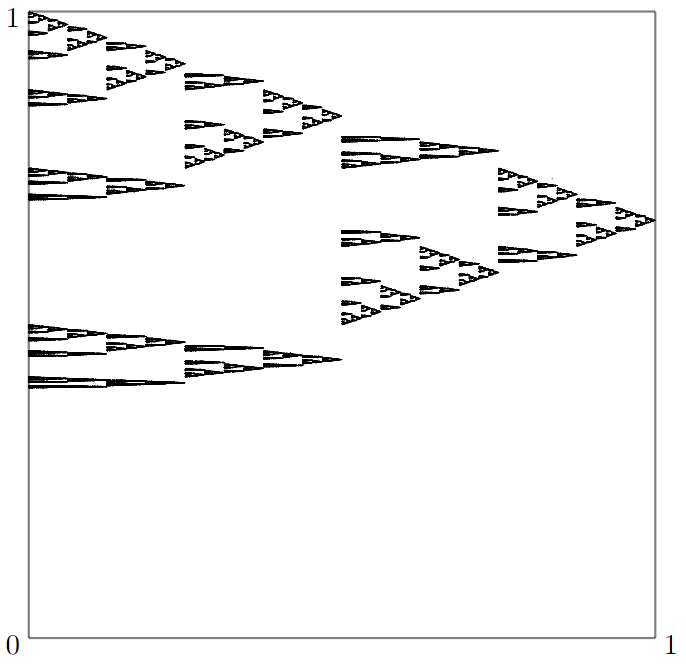}
        \caption{$J_2 = \{(0,2),(1,2),(0,3)\}$}
    \end{subfigure}
    \hfill\null\hfill
    \begin{subfigure}{0.285\linewidth}
        \centering
        \includegraphics[width = \linewidth]{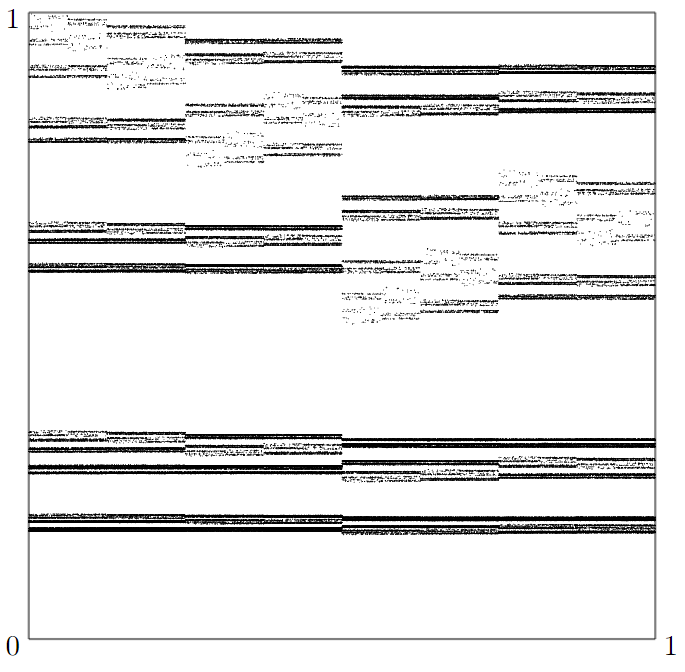}
        \caption{$J_3 = \{0,1\}\times \{2,4,6\}$}
    \end{subfigure}
    \hfill \null
    \caption{The self-affine sets $\mathcal F_{\!J_1}^p, \mathcal F_{\!J_2}^p$ and $\mathcal F_{\!J_3}^p$ for $p = \tfrac12$.}
    \label{fig:the sets}
\end{figure}

Families of finitely generated self-affine sets where the Hausdorff, packing, box-counting and affinity dimensions coincide were already shown to exist in \cite{MS19}. For planar self-affine sets generated by irreducible infinite IFSs it was shown in \cite{KR14,Jurga_dimension_spectrum} that the Hausdorff and affinity dimensions coincide. Our class of self-affine sets provides new examples where these equalities hold. Further, in \cite{Morris} it was proven that the modified affinity dimension from \cite{Fraser_2012} can be simplified when $F$ is the limit set of a finitely generated affine IFS for which the linear parts of the affine maps consist of diagonal and anti-diagonal matrices as long as $\dim_B \pi_1(F) = \dim_B \pi_2(F)$. %and the ROSC holds. 
On our way to proving \Cref{theorem:2DF}, we show that one can drop the condition $\dim_B \pi_1(F) = \dim_B \pi_2(F)$ at the price of having only diagonal matrices and still obtain the same simplification, see \Cref{theorem:FraserSimplified}.

In the above, we discussed dimension results of the sets $F_{\!J}$ and $\mathcal{F}_{\!J}^{p}$ for fixed sets $J \subseteq \{0,1\} \times \mathbb{N}_{\geq2}$. An interesting related question is, given a real number $y \in [0,2]$ can we find $J \subseteq \{0,1\} \times \mathbb{N}_{\geq2}$ such that the dimension of $\mathcal{F}_{\!J}^{p}$ equals $y$?  This question is related to the \emph{Texan conjecture} \cite{Hensley_1996,MU02}, which concerns the density of the dimensions of bounded type continued fraction sets in $[0, 1]$. Its resolution \cite{Kess_Zhu_2006} has generated a wealth of results and questions on the topological structure of the dimension spectrum of infinite IFSs. In \cite{Urbanski_dimension_spectrum} it was shown that the dimension spectra of conformal graph directed Markov systems are compact and perfect and that the IFS resulting from the complex continued fractions algorithm has full dimension spectrum. These results were built on in \cite{Jurga_dimension_spectrum}, where  examples of non-irreducible infinite IFSs consisting of affine maps whose dimension spectrum is neither compact nor perfect were given. Here we show that the self-affine sets $\mathcal F_{\!J}^p$ have full Hausdorff dimension spectrum.
    
    \begin{theorem}\label{thm:dimension_spectrum}
        For $p \in (0,1)$ we have $\{ \dim_{\mathcal{H}}(\mathcal F_{\!J}^p) : J \subseteq \{0,1\} \times \mathbb{N}_{\geq 2} \} 
        =[0,2]$.
    \end{theorem}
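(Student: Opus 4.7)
The containment $\{\dim_{\mathcal H}\mathcal F_{\!J}^p\}\subseteq [0,2]$ is immediate from $\mathcal F_{\!J}^p\subseteq[0,1]^2$, so the plan is to exhibit, for every $y\in[0,2]$, a concrete $J\subseteq\{0,1\}\times\mathbb N_{\geq 2}$ with $\dim_{\mathcal H}\mathcal F_{\!J}^p = y$. First I would dispose of the three boundary values: $y=0$ by taking $J$ to be a singleton, whose attractor is a single point; $y=2$ by $J=\{0,1\}\times\mathbb N_{\geq 2}$, since the telescoping identity $\sum_{d\geq 2} 1/(d(d-1)) = 1$ combined with \eqref{eq:dimesion_of_F_J_p_2} yields $\dim_{\mathcal H} = 1+1=2$; and $y=1$ by $J=\{0\}\times\mathbb N_{\geq 2}$, whose maps $A_{0,d}^p$ contract the first coordinate uniformly to $0$, so that the attractor equals $\{0\}\times[0,1]$.

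For intermediate values, introduce $\theta(I):=\inf\{r\in(0,1]:\sum_{d\in I}(1/(d(d-1)))^r\leq 1\}$. For $y\in(0,1)$ I would take $J=\{0\}\times I$: because the first coordinate contracts uniformly to $0$, the attractor lies on $\{0\}\times[0,1]$ and equals $\{0\}\times F_{\!J}$, and \Cref{theorem:1DinfiniteJ} gives $\dim_{\mathcal H}F_{\!J}=\theta(I)$. For $y\in(1,2)$ I would take $J=\{0,1\}\times I$, so that \eqref{eq:dimesion_of_F_J_p_2} delivers $\dim_{\mathcal H}\mathcal F_{\!J}^p = 1+\theta(I)$. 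Thus the theorem reduces to showing that the map $I\mapsto\theta(I)$ is surjective onto $[0,1]$.

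To realise a given target $t\in(0,1)$, I would construct $I=\bigcup_n I_n$ as an increasing union of finite subsets satisfying $\theta(I_n)\nearrow t$, by a Kessebohmer--Zhu-style greedy construction (cf.\ \cite{Kess_Zhu_2006}). Monotone convergence for the pressure sum makes $\theta$ continuous along increasing unions, so $\theta(I)=\lim_n\theta(I_n)=t$. At step $n$, given finite $I_n$ with $\theta(I_n)<t$, I would pick $D_n$ large enough that each single-step increment $\theta(I_n\cup\{d\})-\theta(I_n)$ for $d\geq D_n$ is at most $1/n$ --- possible because the increment is controlled by $(1/(d(d-1)))^{\theta(I_n)}\to 0$ --- and then append $D_n, D_n+1, \ldots$ one by one, halting just before $\theta$ would cross $t$. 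This yields a finite $I_{n+1}\supseteq I_n$ with $\theta(I_{n+1})\in[t-1/n,\,t]$.

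The hard part will be guaranteeing that the tail really does drive $\theta$ up into the target interval. When $t\leq 1/2$ the tail series $\sum_{d\geq D}(1/(d(d-1)))^t$ diverges for every $D$, so the requirement is automatic. When $t>1/2$ the tail series converges, and one needs $\sum_{d\in I_n}(1/(d(d-1)))^t + \sum_{d\geq D_n}(1/(d(d-1)))^t > 1$ for the halt condition to trigger; I would handle this by first enlarging $I_n$ with small-$d$ elements to raise the head sum to within $\sum_{d\geq D_n}(1/(d(d-1)))^t$ of $1$, and only then switching to the tail refinement. Balancing these two stages --- coarse control from small $d$ versus fine control from large $d$ --- is the principal technical obstacle and mirrors the resolution of the Texan conjecture.
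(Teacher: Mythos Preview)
Your reduction is the same as the paper's: split $[0,2]$ into $[0,1]$ (realised by $J=\{0\}\times I$, whose attractor is $\{0\}\times F_{\{0\}\times I}$) and $[1,2]$ (realised by $J=\{0,1\}\times I$, via \eqref{eq:dimesion_of_F_J_p_2}), then show that $I\mapsto \theta(I)$ hits every value in $[0,1]$. The paper does precisely this; the only difference is how the one-dimensional full-spectrum statement is proved. Rather than constructing $I$ by hand, the paper packages this as \Cref{cor:DS_full_1D} and obtains it from \cite[Corollary~6.8]{Urbanski_dimension_spectrum} by verifying the single tail-domination inequality
\[
\sum_{d\geq k+1}\left(\tfrac{1}{d(d-1)}\right)^{t}\ \geq\ \left(\tfrac{1}{k(k-1)}\right)^{t}\qquad (k\geq 3,\ t\in(0,1)),
\]
together with $\theta(\mathbb N_{\geq 2})=1$. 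Both follow from the telescoping identity $\sum_{d\geq k+1}\tfrac{1}{d(d-1)}=\tfrac{1}{k}$ and monotonicity, so the entire greedy construction is replaced by a one-line check.

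Your direct construction is in the right spirit, but the two-stage scheme for $t>\tfrac12$ has a gap as written. You propose to \emph{first} enlarge $I_n$ with small-$d$ elements until the head sum at exponent $t$ lies in the window $(1-\varepsilon,1)$ with $\varepsilon=\sum_{d\geq D_n}(d(d-1))^{-t}$, and \emph{then} refine with the tail. But the small-$d$ contributions $(d(d-1))^{-t}$ are fixed numbers, and there is no a~priori reason a finite subset of $\{2,\dots,D_n-1\}$ has a partial sum landing in an interval of prescribed (small) width~$\varepsilon$; this is a subset-sum obstruction, and a single coarse addition may jump straight over the window. The standard resolution --- and what the tail-domination condition above encodes --- is to process $d=2,3,\dots$ in order, skipping any $d$ whose inclusion would overshoot, and to observe that whenever $d=k$ is skipped the entire tail $\{k+1,k+2,\dots\}$ still carries at least as much mass as the skipped term, so one never gets stuck below the target. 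Recasting your argument around that inequality closes the gap and collapses the two stages into one.
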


\subsection*{Outline} 

In \Cref{s:prel} we introduce some notation and recall the necessary preliminaries. \Cref{section:generaldiagonal} concerns the results on the affinity dimension of countable collections of diagonal $2 \times 2$~matrices and the proofs of \Cref{theorem:MorrisInfinite} and \Cref{prop:lower bound}. In \Cref{section:1D} we discuss the Hausdorff, upper box-counting and packing dimensions of the self-affine subsets $F_{\!J}$ of $\mathbb{R}$ and the Hausdorff dimension of certain non-autonomous versions of $F_{\!J}$. These results will then be used  in \Cref{s:theorem2} to prove \Cref{theorem:2DF,thm:dimension_spectrum}.

\section{\textbf{Preliminaries}}\label{s:prel}

\subsection{Symbolic dynamics}

An \emph{alphabet} $I$ is a countable set of symbols, which we call \emph{digits}, equipped with the discrete topology. A \emph{word} $u$ with digits in $I$ is a finite concatenation of digits $u = (i_{1}, \ldots, i_{n})$ for some $n \in \mathbb{N}$ and $i_{j} \in I$ for all $j \in \{1,2,\ldots,n\}$. We let $I^{m}$ denote the set of all words of length $m$ with digits in $I$ and set $I^{+} = \bigcup_{m \in \mathbb{N}} I^{m}$. Let $I^{\mathbb{N}} = \{ (i_{k})_{k \in \mathbb{N}} : i_{k} \in I \; \text{for all} \; k \in \mathbb{N} \}$ denote the set of all (one-sided) infinite sequences with elements in $I$ and endow $I^{\mathbb{N}}$ with the product topology. With this topology the space $I^{\mathbb{N}}$ is metrisable and in the case that $I$ is finite $I^{\mathbb{N}}$ is also compact. 
For $i,j \in \mathbb{N}$ with $i \leq j$ and $\omega = (\omega_k)_{k \in \mathbb{N}} \in I^{\mathbb{N}}$ we let $\omega_{[i,j]} = (\omega_i, \omega_{i+1}, \ldots, \omega_{j}) \in I^{j-i+1}$. We use the same notation if $v \in I^k$ and $1 \leq i \leq j \leq k$ for some $k\in \mathbb{N}$. 

\subsection{Self-similar sets, self-affine sets, and the open set condition}\label{s:sss}

Fix $D \in \mathbb{N}$ and let $X$ denote a non-empty compact subset of $\mathbb{R}^{D}$. If $I$ is a countable alphabet, a family $\Phi = \{\phi_i : i \in I \}$ of (non-trivial) contractions $\phi_{i} : X \to X$ is called an \emph{iterated function system} (\emph{IFS}). %To avoid trivial cases, we always assume that $\# I  \geq 2$. 
We call $\Phi$ a \emph{finite IFS} if $I$ is a finite alphabet and an \emph{infinite IFS} if $I$ is a countably infinite alphabet. For any $m\in \mathbb N$ and any finite word $u = (u_{1}, u_{2},\ldots, u_{m})\in I^{+}$ we let 
    \begin{align}\label{q:comps}
        \phi_{u}=\phi_{u_{1}} \circ \phi_{u_{2}} \circ \cdots \circ \phi_{u_{m}},
    \end{align}
and for $\omega = (\omega_k)_{k \in \mathbb{N}} \in I^{\mathbb{N}}$ we observe that $(\phi_{\omega_{[1,k]}}(X))_{k \in \mathbb{N}}$ forms a nested sequence of non-empty compact sets with decreasing diameters. By the Cantor Intersection Theorem, $\bigcap_{k \in  \mathbb{N}} \phi_{\omega_{[1,k]}}(X)$ is a singleton and we denote its only element by $\pi(\omega)$. We call the map $\pi : I^{\mathbb{N}} \to X$ the \emph{projection map} of $\Phi$ and refer to $\pi(I^{\mathbb{N}})$, the image of $I^{\mathbb{N}}$ under $\pi$, as the \emph{limit set} of $\Phi$. When $I$ is finite the natural action of $\Phi$ on the set of compact non-empty subsets of $X$, defined via $\Phi(A) = \bigcup_{i \in I} \phi_i(A)$, is a contraction with respect to the Hausdorff metric. By the Banach Contraction Mapping Principle there exists a unique non-empty set $E$ satisfying $\Phi(E)=E$. Moreover, in this setting, $E=\pi(I^{\mathbb{N}})$. 

Independent of $I$ being finite or countably infinite, if the contractions of $\Phi$ are all \textit{similarities}, that is, if for all $i \in I$ there exists $c_i \in (0,1)$ with $\lvert \phi_i(x) - \phi_i(y) \rvert = c_i \lvert x - y \rvert$ for all $x,y \in X$, then we call the limit set of $\Phi$ \emph{self-similar}. If the contractions of $\Phi$ are \textit{affine}, that is, if for each $i \in I$ there exists a $D \times D$ matrix $L_{i}$ whose singular values lie in  $(0,1)$ and a vector $v_i \in \mathbb{R}^{D}$ with $\phi_{i}((x_{1},\ldots,x_{D})) = (L_{i}(x_{1},\ldots,x_{D})^{\top} + v_i)^{\top}$, for all $(x_{1},\ldots,x_{D}) \in X$, then we call the limit set of $\Phi$ \emph{self-affine}.  

Two natural separation conditions we will use are the open set condition and the rectangular open set condition.  We say that $\Phi$ satisfies the \emph{open set condition (OSC)} if there exists a non-empty open subset $U$ of $X$ such that $\phi_{i}(U) \subseteq U$ and $\phi_{i}(U) \cap \phi_{j}(U) = \emptyset$ for all distinct $i,j \in I$. Such sets $U$ will be called \emph{feasible open sets for $\Phi$.} For self-affine sets we sometimes require a slightly stronger separation condition, namely that the OSC is satisfied with $U = (a_{1},b_{1}) \times \cdots \times (a_{D},b_{D})$, for some $a_{1}, \ldots, a_{D}, b_{1}, \ldots, b_{D} \in \mathbb{R}$ with $a_k<b_k$ for all $k\in\{1.\ldots,D\}$.  We refer to this latter separation condition as the \emph{rectangular open set condition (ROSC)}. 

The above represents the \emph{autonomous} setting; the contractions in \eqref{q:comps} are chosen from the same IFS at each time step. A more general setting is the \emph{non-automonous} setting, which is where the IFS is allowed to vary at each time step.  Formally, a \emph{non-automous self-similar iterated function system} (\emph{NSIFS}) consists of a sequence $\Phi=(\Phi^{(n)})_{n \in \mathbb{N}}$ of self-similar IFSs $\Phi^{(n)} = \{ \phi_{i}^{(n)} : i \in I^{(n)} \}$ defined on a common non-empty compact set $X \subseteq \mathbb{R}^{D}$.

As in the autonomous case we observe, for $(\omega_n)_{n \in \mathbb{N}} \in \prod_{n \in \mathbb{N}} I^{(n)}$, that $(\phi_{\omega_{1}}^{(1)} \circ \cdots \circ \phi_{\omega_{n}}^{(n)}(X))_{n \in \mathbb{N}}$ forms a nested sequence of non-empty compact sets with decreasing diameters.  Therefore $\bigcap_{n \in \mathbb{N}} \phi_{\omega_{1}}^{(1)} \circ \cdots \circ \phi_{\omega_{n}}^{(n)}(X)$ is a singleton. As above, let us denote the element of this singleton by $\pi(\omega)$.  We refer to the map $\pi : \prod_{n \in \mathbb{N}} I^{(n)} \to X$ as the \emph{projection map} of $\Phi$, and call the image $\pi(\prod_{n \in \mathbb{N}} I^{(n)})$ of $\prod_{n \in \mathbb{N}} I^{(n)}$ under $\pi$ the \emph{limit set} of $\Phi$. Further, we say that the NSIFS $\Phi$ satisfies the OSC if $\phi_{i}^{(n)}(\operatorname{int}(X)) \cap \phi_{j}^{(n)}(\operatorname{int}(X)) = \emptyset$ for all $n \in \mathbb{N}$ and all distinct $i,j \in I^{(n)}$.  

In order to obtain dimension estimates on the limit set of an NSIFS, we will assume the OSC and additionally the \emph{uniform contraction condition}. The latter means there exists an $\eta \in (0,1)$ such that for each $j \in \mathbb{N}$ we have $c_{\omega_{j}}^{(j)} c_{\omega_{j+1}}^{(j+1)} \cdots c_{\omega_{j+m}}^{(j+m)} \leq \eta^{m}$ for all sufficiently large $m \in \mathbb{N}$, where $\omega_{k} \in I^{(k)}$ and where $c_{\omega_{k}}^{(k)}$ denotes the contraction ratio of the similarity $\phi_{\omega_{k}}^{(k)}$ for each $k \in \mathbb{N}$. For further details on NSIFSs we refer the reader to \cite{rempe2016non}.

\subsection{Box-counting, Hausdorff, and (modified) affinity dimensions}

In this section we introduce the notions of dimension which we will mainly be concerned with, namely the box-counting, Hausdorff, affinity and modified affinity dimensions. We will also touch on the packing dimension, but since we do not use its definition directly we omit it. For more information on these notions of dimension we refer the reader to \cite{MR1449135,MR3236784}.

Fix $D\in \mathbb{N}$. The \emph{lower} and \emph{upper box-counting dimensions} of a bounded set $F \subseteq \mathbb{R}^{D}$ are defined by
    \begin{align*}
        \underline{\dim}_{B}(F) = \liminf_{\delta \to 0} \frac{\log(N_{\delta}(F))}{-\log(\delta)}
        \qquad \text{and} \qquad 
        \overline{\dim}_{B}(F) = \limsup_{\delta \to 0} \frac{\log(N_{\delta}(F))}{-\log(\delta)}
    \end{align*}
respectively, where $N_{\delta}(F)$ denotes the smallest cardinality of a $\delta$-cover of $F$, or alternatively, the number of closed squares in a $\delta$-mesh whose intersection with $F$ is non-empty.  When $\underline{\dim}_{B}(F)$ and $\overline{\dim}_{B}(F)$ are equal we refer to the common value as the \emph{box-counting dimension} of $F$ and denote it by $\dim_{B}(F)$.

Let $F$ be as above and let $s$ and $\delta$ denote two non-negative real numbers.  We define the \emph{$\delta$-approximate to the $s$-dimensional Hausdorff outer measure of} $F$ to be $\mathcal{H}_{\delta}^{s}(F) = \inf \left\{ \sum_{i} \diam(U_{i})^s : F \subseteq \bigcup_{i} U_{i} \; \text{and} \; 0 \leq \diam(U_{i}) < \delta \right\}$, where $\diam(U_{i})$ denotes the diameter of $U_{i}$.  The \emph{$s$-dimensional Hausdorff outer measure of} $F$ is given by $\mathcal{H}^{s}(F) = \lim_{\delta \to 0} \mathcal{H}_{\delta}^{s}(F)$ and the \emph{Hausdorff dimension} of $F$ is $\dim_{\mathcal{H}}(F) = \inf \{ s \geq 0 \colon \mathcal{H}^{s}(F) = 0 \}$, which coincides with the value $\sup \{ s \geq 0 \colon \mathcal{H}^{s}(F) = \infty \}$. Note that for a bounded set $F \subseteq \mathbb{R}^{D}$ these dimensions satisfy the relations $\dim_{\mathcal{H}}(F) \leq \underline{\dim}_{B}(F) \leq \overline{\dim}_{B}(F)$ and $\dim_{\mathcal{H}}(F) \leq \dim_{P}(F) \leq \overline{\dim}_{B}(F)$. However, in general, there is no relationship between the lower box-counting and packing dimensions of a given set.

Let $M_{D}(\mathbb{R})$ denote the collection of $D \times D$~matrices over $\mathbb{R}$. Given $L \in M_{D}(\mathbb{R})$ and $i \in \{1,2,\ldots, D \}$, we denote the $i$-th largest singular value of $L$, including multiplicities, by $\alpha_{i}(L)$ and we define the \emph{singular value function} $\varphi^{r}$ by
    \begin{align}\label{eq:singular-value-fcn}
        \varphi^{r}(L) = 
            \begin{cases}
                \alpha_{1}(L) \alpha_{2}(L) \cdots \alpha_{\lceil r \rceil-1}(L) (\alpha_{\lceil r \rceil}(L))^{r-\lceil r \rceil+1} & \text{if} \; r \in (0, D],\\[0.25em]
                \lvert \det(L) \rvert^{r/D} & \text{if} \; r > D,
            \end{cases}
    \end{align}
where $\lceil r\rceil = \min\{k\in\mathbb{Z} \,:\, k \geq r \}$. It is through this function that for finite alphabets $I$ the \emph{affinity dimension} $d( L_{i} \ \vert \ i \in I)$ of a collection of matrices $\{ L_i \}_{i \in I}$ was defined by Falconer in \cite{falconer_1988} by setting
    \begin{align*}
        d( L_{i} \ \vert \ i \in I) = \inf \left\{ r \in (0, D] \, : \, \sum_{m \in \mathbb{N}} \sum_{u \in I^{m}} \varphi^{r}(L_{u}) < \infty \right\}
    \end{align*}
where for $u = (u_{1}, \ldots, u_{m}) \in I^{m}$ with $m \in \mathbb{N}$ we set $L_{u} = L_{u_{1}}\cdots L_{u_{m-1}} L_{u_{m}}$.

The modified affinity dimension introduced in \cite{Fraser_2012} by Fraser for box-like self-affine sets is a variant of Falconer's affinity dimension that relies on knowledge of the dimensions of the projection of the given self-affine set $F$ onto the coordinate axes. Moreover, it is defined only when the ambient space is $\mathbb{R}^{2}$. Before defining the modified affinity dimension, we introduce some further notation.

Let $\Phi = \{ \phi_i : i \in I \}$ be a finite IFS containing affine maps $\phi_i : [0,1]^2 \to [0,1]^2$ defined, for $i \in I$ and $(w,x) \in \mathbb{R}^{2}$, by $\phi_i(w,x) = (L_i (w,x)^{\top} + v_i)^{\top}$ where $L_i \in M_2(\mathbb{R})$ is a diagonal matrix and $v_i \in \mathbb{R}^{2}$ is a translation vector. Assume that $\Phi$ satisfies the ROSC and let $F \subseteq \mathbb{R}^{2}$ denote the limit set of $\Phi$. Observe that, under our assumptions, the projections $\pi_{1}(F)$ and $\pi_{2}(F)$ are self-similar subsets of $\mathbb{R}$ generated by finite IFSs. Assuming that these systems satisfy the OSC, both $\dim_{B}(\pi_{1}(F))$ and $\dim_{B}(\pi_{2}(F))$ exist, see for instance \cite{Hut1981}. For $u \in I^+$ we define $\pi_{u} : \mathbb{R}^{2} \to \mathbb{R}$ by
    \begin{align}\label{q:piu}
        \pi_{u} =
            \begin{cases}
                \pi_{1} & \text{if} \; 
                \diam(\pi_{1}(\phi_{u}([0,1]^{2})) ) \geq \diam(\pi_{2}(\phi_{u}([0,1]^{2}))),\\[0.25em]
                \pi_{2} & \text{if} \; 
                \diam(\pi_{1}(\phi_{u}([0,1]^{2}))) < \diam(\pi_{2}(\phi_{u}([0,1]^{2}))),
            \end{cases}
    \end{align}
and set $r(u) = \dim_{B}(\pi_{u}(F))$.
For $r > 0$ and $u \in I^+$ the \emph{modified singular value function} $\varphi_{\text{mod}}^{r}$ of $L_{u}$ is defined by 
    \begin{align}\label{eq:mod-sing-value}
        \varphi_{\text{mod}}^{r}(L_{u}) 
        = \alpha_{1}(L_{u})^{r(u)} \alpha_{2}(L_{u})^{r-r(u)}.
    \end{align}
Note that these definitions are simplified slightly compared to the original definitions in \cite{Fraser_2012}, as we will only consider affine contractions with diagonal linear parts. As a consequence each box-like set in the present article will be of \textit{separated type}, meaning each contraction maps horizontal lines to horizontal lines.

In \cite{Fraser_2012} it was shown that for a finite IFS the \textit{modified pressure function} $P_{\text{mod}} \colon \mathbb{R}_{> 0} \to \mathbb{R}_{> 0}$ given by 
    \begin{align}\label{eq:pressure_mod_defn}
        P_{\text{mod}}(r) = \lim_{n \to \infty} \left(\sum_{u\in I^n} \varphi_{\text{mod}}^{r}(L_{u}) \right)^{1/n}
    \end{align}
is well defined and strictly decreasing in $r$.
Furthermore, it was shown that there exists a unique $t \in \mathbb{R}_{> 0}$, which we will refer to as the \emph{modified affinity dimension} of $F$, satisfying $P_{\text{mod}}(t) = 1$, and that under the given assumptions we have $\dim_{B}(F) = \dim_{P}(F) = t$.

\section{\textbf{Affinity dimensions for infinite affine IFSs with diagonal linear parts}} 
\label{section:generaldiagonal}

In this section we prove \Cref{theorem:MorrisInfinite,prop:lower bound}.  Following this we show, in \Cref{theorem:FraserSimplified}, that in our setting the modified affinity dimension from \cite{Fraser_2012} can be simplified. We begin with the proof of \Cref{theorem:MorrisInfinite} where we utilise ideas from \cite{Morris}.

\begin{proof}[Proof of \Cref{theorem:MorrisInfinite}]
    Let $I$ be a countable alphabet and $\{L_i : i\in I\}$ a collection of diagonal $2\times 2$~matrices, as given in \eqref{eq:Li}, with $\sup_{i\in I} \max\{ \lvert a_i \rvert, \lvert b_i \rvert \} < 1$. Note that the singular value function $\varphi^r(L)$ of a matrix $L$, as defined in \eqref{eq:singular-value-fcn}, is non-negative, strictly decreasing and continuous in $r$, so by the root test,
        \begin{align*} 
            d(L_i \ \vert \ i\in I) =  \inf\left\{ r > 0  : \limsup_{m \to \infty} \left(\sum_{u\in I^m} \varphi^r(L_u)\right)^{1/m} \leq 1\right\}.
        \end{align*}
    For a diagonal matrix $L=\begin{pmatrix} a & 0\\ 0 & b \end{pmatrix}$ and any $r>0$ define the matrix
        \begin{align*}
            L^{(r)} =
                \begin{pmatrix}
                    a^{(r)} & 0\\
                    0 & b^{(r)}
                \end{pmatrix}
            = \begin{cases}
                    \begin{pmatrix}
                        \lvert a \rvert^r & 0 \\
                        0 & \lvert b \rvert^r
                        \end{pmatrix} & \text{if} \; 0 < r\leq 1\\[1.25em]
                    \begin{pmatrix}
                    \lvert a \rvert \cdot \lvert b \rvert^{r-1} & 0 \\
                    0 & \lvert b \rvert \cdot \lvert a \rvert^{r-1}
                    \end{pmatrix} & \text{if} \; 1<r\leq 2\\[1.25em]
                    \begin{pmatrix}
                    \lvert a b\rvert^{r/2} & 0 \\
                    0 & \lvert a b \rvert^{r/2}
                    \end{pmatrix} & \text{if} \; 2<r.
            \end{cases}
        \end{align*}
    Next, we show $\varphi^r(L) = \lVert L^{(r)} \rVert$ for each $r>0$, where $\lVert \cdot \rVert$ denotes the operator norm on $M_2(\mathbb{R})$:
        \begin{itemize}
            \item For $0 < r\leq 1$ we have $\varphi^r(L) = \alpha_1(L)^r = \max\{\lvert a \rvert^r, \lvert b \rvert^r\} = \lVert L^{(r)} \rVert$.\\[-0.75em]
            \item For $1 < r \leq 2$, since $\alpha_1(L) \geq \alpha_2(L)$ by definition, we have $\tfrac{\alpha_1(L)}{\alpha_2(L)} \geq 1$ and hence $(\tfrac{\alpha_1(L)}{\alpha_2(L)} )^{r-1} \leq \tfrac{\alpha_1(L)}{\alpha_2(L)}$, or equivalently $\alpha_1(L)\cdot \alpha_2(L)^{r-1} \geq \alpha_2(L) \cdot \alpha_1(L)^{r-1}$. This implies that $\varphi^r(L) = \max\{\lvert a \rvert \cdot \lvert b \rvert^{r-1}, \lvert b \rvert \cdot \lvert a \rvert^{r-1}\} = \lVert L^{(r)} \rVert$.\\[-0.75em]
            \item For $r>2$ we have $\varphi^r(L) = \lvert \det(L)\rvert^{r/2} = \lvert a b\rvert^{r/2} = \lVert L^{(r)} \rVert$.
        \end{itemize}
    Observe furthermore that $(LK)^{(r)} = L^{(r)}K^{(r)}$ for diagonal matrices $L$ and $K$. Therefore, if we return to our collection of matrices $\{ L_i : i \in I\}$, we can unambiguously set $L_u^{(r)} = L_{i_1}^{(r)} \cdots L_{i_m}^{(r)} = (L_{i_1} \cdots L_{i_m})^{(r)}$ for each $u = (i_1,\ldots,i_m)\in I^m$. In particular, it follows that 
        \begin{align}\label{eq:svf-norm}
            \varphi^r(L_{u}) = \lVert L_{u}^{(r)} \rVert\quad\text{for any}\ u \in I^m\ \text{and}\ r>0.
        \end{align}
    Suppose that for some $r>0$ the series $\sum_{i\in I} L_i^{(r)}$ does not converge in $(M_{2}(\mathbb{R}),\|\cdot\|)$. In other words, suppose that $P_I(r) = \max\{ \sum_{i\in I} a_i^{(r)}, \sum_{i\in I} b_i^{(r)}\} = \infty$. In this case, \eqref{eq:svf-norm} implies for $m\in \mathbb{N}$ that
        \begin{align*}
            \sum_{u\in I^m} \varphi^r(L_{u})
            = \sum_{u\in I^m} \lVert L_{u}^{(r)}\rVert
            &= \sum_{i_1,\ldots,i_m\in I} \max\left\{ a_{i_1}^{(r)}\cdots a_{i_m}^{(r)}, b_{i_1}^{(r)}\cdots b_{i_m}^{(r)} \right\}\\ 
            &\geq \max\left\{ \sum_{i_1,\ldots,i_m\in I} a_{i_1}^{(r)}\cdots a_{i_m}^{(r)}, \sum_{i_1,\ldots,i_m\in I} b_{i_1}^{(r)}\cdots b_{i_m}^{(r)} \right\}
            = \max\left\{\sum_{i\in I} a_i^{(r)}, \sum_{i\in I} b_i^{(r)} \right\}^m
            = \infty.
        \end{align*}
    Hence $\limsup_{m \to \infty} \left(\sum_{u\in I^m} \varphi^r(L_u)\right)^{1/m} = P_I(r)$.
    If on the other hand the series $\sum_{i\in I} L_i^{(r)}$ converges, then
        \begin{align*}
            \left\lVert \sum_{i\in I} L_i^{(r)}  \right\rVert = \max\left\{ \sum_{i\in I} a_i^{(r)}, \sum_{i\in I} b_i^{(r)}\right\} = P_I(r) < \infty.
        \end{align*}
    For $L \in M_2(\mathbb{R})$ let $\lvert L \rvert$ denote the sum of the absolute values of the components of $L$, and observe that $\lvert \cdot \rvert$ is a norm on $M_2(\mathbb{R})$. If $L$ and $K$ are non-negative diagonal matrices we have $\lvert L+K \rvert = \lvert L \rvert + \lvert K \rvert$ and so by the continuity of norms we have $\lvert \sum_{i=1}^\infty K_i \rvert = \sum_{i=1}^\infty \lvert K_i \rvert$ for any sequence $(K_i)_i$ of non-negative diagonal matrices. Note, for a diagonal matrix 
        \begin{align*}
            L=\begin{pmatrix} a & 0\\ 0 & b \end{pmatrix}
        \end{align*}    
    and $m\in \mathbb{N}$ we have $\lVert L \rVert = \max\{ \lvert a \rvert, \lvert b \rvert \} = \max\{ \lvert a \rvert^m, \lvert b \rvert^m \}^{1/m} = \lVert L^m \rVert^{1/m}$. Since $M_2(\mathbb{R})$ is a finite-dimensional vector space, the norms $\lVert \cdot \rVert$ and $\lvert \cdot \rvert$ are equivalent. Therefore, there exists $c>0$ such that $c^{-1}|L|\leq \|L\|\leq c|L|$ for any $L\in M_2(\mathbb{R})$. Combining the above yields the following for $m\in \mathbb{N}$;
        \begin{align}\label{eq:svf-c-upper}
            \sum_{u\in I^m} \varphi^r(L_{u})  
            =   \sum_{u\in I^m} \left\lVert L_{u}^{(r)}  \right\rVert 
            \leq \sum_{u\in I^m} c\left\lvert L_{u}^{(r)}  \right\rvert 
            = c\left\lvert \sum_{u\in I^m} L_{u}^{(r)} \right\rvert
            \leq c^{2} \left\lVert \sum_{u\in I^m} L_{u}^{(r)} \right\rVert
            = c^{2}\left\lVert \left( \sum_{i\in I} L_i^{(r)}\right)^m \right\rVert
            = c^{2}\left\lVert \sum_{i\in I} L_i^{(r)} \right\rVert^m.
        \end{align}
        Likewise
        \begin{align}\label{eq:svf-c-lower}
            \sum_{u\in I^m} \varphi^r(L_{u})  
            \geq c^{-2}\left\lVert \sum_{i\in I} L_i^{(r)} \right\rVert^m.
        \end{align}     
        Equations \eqref{eq:svf-c-upper} and \eqref{eq:svf-c-lower} together imply
        \[
            \limsup_{m \to \infty} \left( \sum_{u\in I^m} \varphi^r(L_{u}) \right)^{1/m} 
            = \left\lVert \sum_{i\in I} L_i^{(r)} \right\rVert = P_I(r).
            \qedhere
        \]
\end{proof}

We now prove \Cref{prop:lower bound}. For \eqref{item:prop:lower_bound_1}, that is, to show that the affinity dimension is always an upper bound for the Hausdorff dimension, we follow steps similar to those in the proof for \cite[Proposition~5.1]{falconer_1988}.

\begin{proof}[Proof of \Cref{prop:lower bound}]
    For Part \eqref{item:prop:lower_bound_1}, since $F \subseteq \mathbb{R}^2$, we naturally have $\dim_{\mathcal{H}}(F)\leq2$.  Therefore it is sufficient to show that $\dim_{\mathcal{H}}(F) \leq d(L_i \ \vert \ i \in I )$. To this end, let $\delta > 0$ be given. By the assumption that $\sup_{i\in I} \max\{ \lvert a_i \rvert, \lvert b_i \rvert \} < 1$ there exists some integer $k_\delta$ such that for all sequences $u\in I^{k_\delta}$ we have $\alpha_1(L_{u}),\alpha_2(L_{u}) < \delta$. Now take any $m\geq k_\delta$ and recall that $F\subseteq \bigcup_{u \in I^m} A_{u}([0,1]^2)$. For each $u\in I^m$, $A_{u}([0,1]^2)$ is a rectangle with side lengths $\alpha_1(L_{u})$ and $\alpha_2(L_{u})$. This rectangle can be covered by $\lceil \alpha_1(L_{u})/\alpha_2(L_{u})\rceil$ squares of side length $\alpha_2(L_{u})$, and hence also by this many circles of diameter $\sqrt{2} \alpha_2(L_{u})$. Note that, since $\alpha_1(L_{u})/\alpha_2(L_{u})\geq 1$, we have $\lceil \alpha_1(L_{u})/\alpha_2(L_{u})\rceil \leq 2 \alpha_1(L_{u})/\alpha_2(L_{u})$. For every $0 <  r \leq 2$ we have
        \begin{align*}
            \mathcal H^r_{\sqrt{2}\delta}(F)
            \leq \sum_{u\in I^m} 2\frac{\alpha_1(L_{u})}{\alpha_2(L_{u})} \left( \sqrt{2} \alpha_2(L_{u})\right)^r
            = 2\left(\sqrt{2}\right)^r \sum_{u \in I^m} \alpha_1(L_{u}) \alpha_2(L_{u})^{r-1} \leq 4 \sum_{u \in I^m} \varphi^r(L_{u}).
        \end{align*}
    Since this holds for all $m\geq k_\delta$, and since $k_\delta$ diverges to infinity as $\delta$ tends to zero,
        \begin{align*}
            0 
            \leq \mathcal H^r(F)
            \leq 4 \limsup_{m \to \infty} \sum_{u \in I^m} \varphi^r(L_{u}),
        \end{align*}
    for each $0 < r \leq 2$. Now for any $r$ satisfying $\sum_{m=1}^\infty \sum_{u \in I^m} \varphi^r(L_{u}) < \infty$ we have $\limsup_{m \to \infty} \sum_{u \in I^m} \varphi^r(L_{u}) < \infty$ and so $\mathcal H^r(F) < \infty$. Thus, $\dim_{\mathcal{H}}(F) = \inf\{ r \geq 0 : \mathcal H^r(F) < \infty \} \leq \inf\{ r > 0 :  \sum_{m=1}^\infty \sum_{u \in I^m} \varphi^r(L_{u}) < \infty\} = d(L_i \ \vert \ i \in I)$. This concludes the proof of \Cref{prop:lower bound} \eqref{item:prop:lower_bound_1}.

    For Part (\ref{item:prop:lower_bound_2}) observe that $P_I(r) = \sup_{I'\subseteq I \; \text{finite}} P_{I'}(r)$ for all $r > 0$ with $P_I(r)$ as defined in \eqref{eq:pressure}. By assumption, for a finite subset $I_{2}$ with $I_1 \subseteq I_2 \subseteq I$, we have $d(L_i \ \vert \ i\in I_2) = \dim_{B}(F_{I_2}) \le 2$. By \Cref{theorem:MorrisInfinite}, $P_{I_2}(r+\varepsilon)\leq 1$ for all $\varepsilon>0$ and all $r \geq 2$, which implies $d(L_i \ \vert \ i\in I) = \inf\{r > 0 : P_{I}(r)\leq 1\} \le 2$.

    Next, we show that
        \begin{align}\label{q:finitesubsets}
            d(L_i \ \vert \ i\in I) = \sup_{I'\subseteq I \; \text{finite}} d(L_i \ \vert \ i\in I'),
        \end{align} 
    from which we will conclude the required result. To this end, observe that each of the series in the definition of $P_I$ has positive terms, and thus, $P_{I_1}(r) \leq P_{I_2}(r)$ for $r > 0$ and $I_1 \subseteq I_2 \subseteq I$. Therefore, 
        \begin{align*}
            d(L_i \ \vert \ i \in I)
            = \inf\left\{r > 0 : \sup_{I'\subseteq I \; \text{finite}} P_{I'}(r)\leq 1\right\}
            = \inf\{r > 0 : P_{I'}(r)\leq 1 \; \text{for all finite} \; I'\subseteq I\}.
        \end{align*}
    Write $Z = \sup_{I'\subseteq I \; \text{finite}} d(L_i \ \vert \ i \in I') = \sup_{ I' \subseteq I \; \text{finite}} \inf\{r > 0 : P_{I'}(r)\leq 1\}$. For each finite subset $I'\subseteq I$ we have $Z \geq \inf\{r > 0 : P_{I'}(r) \leq 1\}$, so since $P_{I'}(r)$ is strictly decreasing in $r$ we also have $P_{I'}(Z) \leq 1$. As this holds for all finite $I' \subseteq I$, it follows that $d(L_i \ \vert \ i\in I) \leq Z$. Further, for each $r > d(L_i \ \vert \ i\in I)$ we have $P_{I'}(r) \leq 1$ for each finite $I'\subseteq I$ and hence $r\geq Z$, from which we conclude that $d(L_i \ \vert \ i \in I) = Z$.   
    
    For each $I_{1}\subseteq I$ we have $F_{I_{1}} \subseteq F$, where $F_{I_{1}}$ is as in our hypotheses of \Cref{prop:lower bound}\eqref{item:prop:lower_bound_2}.
    By the monotonicity of both the lower box-counting and affinity dimensions, and by \eqref{q:finitesubsets}, we have that
        \begin{align*}
            \underline{\dim}_{B}(F)
            \geq  \! \sup_{I_2\subseteq I \; \text{finite}} \!\underline{\dim}_{B}(F_{\!I_2})
            =  \! \sup_{I_1\subseteq I_2\subseteq I \; \text{finite}} \! \dim_{B}(F_{\!I_2})
            = \! \sup_{I_1\subseteq I_2\subseteq I \; \text{finite}} \! d(L_i \ \vert \ i \in I_2)
            = \! \sup_{I_2 \subseteq I \; \text{finite}} \! d(L_i \ \vert \ i \in I_2) = d(L_i \ \vert \ i \in I).
        \end{align*}
    Since the affine maps we consider are bi-Lipschitz, it follows from  \cite[Theorem~3.1]{mauldin1996dimensions} that 
        \begin{align*}
            \dim_{P}(F) = \overline{\dim}_{B}(F) = \dim_{P} (\overline F) = \overline{\dim}_{B}(\overline F).
        \end{align*}        
    Thus, under our assumptions,
    $d(L_i \ \vert \ i\in I) \leq \underline{\dim}_{B}(F) \leq \overline{\dim}_{B}(F) = \dim_{P}(F)$.
\end{proof}

\begin{remark}
    In \cite[Theorem~5.4]{falconer_1988} and \cite[Theorem~2.4]{Fraser_2012} it is shown that the affinity and modified affinity dimensions of a finite affine IFS is an upper bound for the upper box-counting dimension of the associated self-affine set.  This result relies on the fact that the singular values of the affine maps in a finite IFS are uniformly bounded from below by a positive constant. Such a lower bound on the singular values does not exist in general for infinite IFSs. Thus, the proofs of the aforementioned theorems do not naturally generalise to the case of infinite affine IFSs.
\end{remark}

In \cite{Fraser_2012} it was shown that for a class of finitely generated planar box-like self-affine sets the box-counting and packing dimensions are bounded above by (and when the ROSC is satisfied equal to) the modified affinity dimension, which is the unique $t \in \mathbb{R}_{> 0}$ solving $P_{\text{mod}}(t) = 1$, see \eqref{eq:pressure_mod_defn}. In \cite[Proposition~5]{Morris}, for a similar class of finitely generated planar box-like self-affine sets $F$, and under the assumption that $\dim_B \pi_1(F) = \dim_B \pi_2(F)$, a simple expression for $P_{\text{mod}}$ was obtained. Next, we show that in the case where we have only diagonal matrices the same simplification of $P_{\text{mod}}$ can be obtained no matter the values of $\dim_B \pi_1(F)$ and $\dim_B \pi_2(F)$.

\begin{prop} \label{theorem:FraserSimplified}
    Let $I$ be a finite alphabet and let $F$ be the limit set of an IFS $\{A_i : i\in I\}$ where each $A_i$ is an affine contraction on $[0,1]^2$ with linear part $L_{i} \in M_{2}(\mathbb{R})$ as given in \eqref{eq:Li}. Set $r_1 = \dim_B (\pi_1(F))$ and $r_2 = \dim_B (\pi_2(F))$ and assume $\{A_i : i\in I\}$ satisfies the ROSC. Under these assumptions, $\dim_B (F) = \dim_P (F) = t$, where $t \in \mathbb{R}_{> 0}$ is the unique solution to
        \begin{align*}
            \max\left\{\sum_{i\in I} \lvert a_i \rvert^{r_1} \lvert b_i \rvert^{t - r_1}, \sum_{i\in I} \lvert b_i \rvert^{r_2} \lvert a_i \rvert^{t - r_2} \right\}
            = 1.
        \end{align*}
\end{prop}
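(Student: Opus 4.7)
The plan is to invoke \cite[Theorem~2.4]{Fraser_2012}, which guarantees $\dim_{B}(F)=\dim_{P}(F)=t$ for the unique $t>0$ solving $P_{\mathrm{mod}}(t)=1$, and then to simplify the modified pressure $P_{\mathrm{mod}}$ to the announced closed form in the range where $t$ must lie. Throughout, I write $S_{1}(r) = \sum_{i\in I}\lvert a_{i}\rvert^{r_{1}}\lvert b_{i}\rvert^{r-r_{1}}$ and $S_{2}(r) = \sum_{i\in I}\lvert b_{i}\rvert^{r_{2}}\lvert a_{i}\rvert^{r-r_{2}}$ for the two sums inside the maximum.

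First I would introduce, for a diagonal matrix $L = \operatorname{diag}(a,b)$ and $r>0$, the auxiliary multiplicative quantities $\psi^{r}(L) = \lvert a\rvert^{r_{1}}\lvert b\rvert^{r-r_{1}}$ and $\theta^{r}(L) = \lvert b\rvert^{r_{2}}\lvert a\rvert^{r-r_{2}}$. Multiplicativity under products of diagonal matrices gives $\sum_{u\in I^{n}}\psi^{r}(L_{u}) = S_{1}(r)^{n}$ and $\sum_{u\in I^{n}}\theta^{r}(L_{u}) = S_{2}(r)^{n}$, while \eqref{q:piu} and \eqref{eq:mod-sing-value} show that $\varphi_{\mathrm{mod}}^{r}(L_{u}) = \psi^{r}(L_{u})$ precisely when $\lvert a_{u}\rvert \geq \lvert b_{u}\rvert$ and $\varphi_{\mathrm{mod}}^{r}(L_{u}) = \theta^{r}(L_{u})$ otherwise. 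The central algebraic identity
\begin{equation*}
    \frac{\psi^{r}(L_{u})}{\theta^{r}(L_{u})} = \left(\frac{\lvert a_{u}\rvert}{\lvert b_{u}\rvert}\right)^{r_{1}+r_{2}-r}
\end{equation*}
has non-negative exponent whenever $r \leq r_{1}+r_{2}$, and a short case analysis then yields $\varphi_{\mathrm{mod}}^{r}(L_{u}) = \max(\psi^{r}(L_{u}),\theta^{r}(L_{u}))$ throughout that range, regardless of which of $\lvert a_{u}\rvert,\lvert b_{u}\rvert$ is larger. Sandwiching this maximum between each summand and their sum gives
\begin{equation*}
    \max\{S_{1}(r)^{n},S_{2}(r)^{n}\} \;\leq\; \sum_{u\in I^{n}}\varphi_{\mathrm{mod}}^{r}(L_{u}) \;\leq\; S_{1}(r)^{n} + S_{2}(r)^{n},
\end{equation*}
so taking $n$-th roots and passing to the limit produces $P_{\mathrm{mod}}(r) = \max\{S_{1}(r),S_{2}(r)\}$ for every $r \in (0,r_{1}+r_{2}]$.

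To close the loop I would note that $F \subseteq \pi_{1}(F)\times\pi_{2}(F)$, so the classical product bound for upper box dimension combined with Fraser's theorem gives $t = \dim_{B}(F) \leq r_{1}+r_{2}$. Therefore $t$ lies in the range where the simplified expression applies and satisfies $\max\{S_{1}(t),S_{2}(t)\} = P_{\mathrm{mod}}(t) = 1$. Uniqueness as a solution of the simplified equation then follows from the strict monotonicity in $r$ of each $S_{j}$, since every summand involves a positive base strictly less than one raised to a linear function of $r$. The crux of the proof is the identification $\varphi_{\mathrm{mod}}^{r}(L_{u}) = \max(\psi^{r}(L_{u}),\theta^{r}(L_{u}))$, which is precisely the algebraic phenomenon that lets us drop the symmetry hypothesis $\dim_{B}\pi_{1}(F) = \dim_{B}\pi_{2}(F)$ of \cite[Proposition~5]{Morris}; the cut-off $r \leq r_{1}+r_{2}$ is forced by the sign of the exponent in the ratio identity, making the elementary product bound on $\dim_{B}(F)$ an essential input.
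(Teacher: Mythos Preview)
Your proposal is correct and follows essentially the same route as the paper: both identify $\varphi_{\mathrm{mod}}^{r}(L_{u}) = \max\{\psi^{r}(L_{u}),\theta^{r}(L_{u})\}$ for $r\le r_{1}+r_{2}$ via the ratio identity, use $\dim_{B}(F)\le r_{1}+r_{2}$ to restrict to that range, and then apply \cite[Theorem~2.4]{Fraser_2012}. The only cosmetic difference is that the paper passes from the pointwise maximum to $P_{\mathrm{mod}}(r)=\max\{S_{1}(r),S_{2}(r)\}$ by recycling the matrix-norm trick from the proof of \Cref{theorem:MorrisInfinite}, whereas your elementary sandwich $\max\{S_{1}^{n},S_{2}^{n}\}\le\sum_{u}\varphi_{\mathrm{mod}}^{r}(L_{u})\le S_{1}^{n}+S_{2}^{n}$ achieves the same conclusion more directly.
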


\begin{proof}
   Let $u = (i_1,\ldots,i_m)\in I^m$ for some $m\in \mathbb{N}$. The singular values of $L_u$ are $a(u) = \prod_{k=1}^m \lvert a_{i_k} \rvert$ and $b(u) = \prod_{k=1}^m \lvert b_{i_k} \rvert$. Recall that $r(u) = \dim_B (\pi_u (F))$, where $\pi_u$ is as in \eqref{q:piu}. Since for $j\in \{1,2\}$, $r(u)$ equals $r_j$ when $\alpha_1(L_u)$ corresponds to the contraction in the $j$-th coordinate, we obtain for each $r > 0$ that, with $\varphi^r_{\text{mod}}$ as in \eqref{eq:mod-sing-value},
    \begin{align*}
        \varphi^r_{\text{mod}}(L_u) = \begin{cases}
            a(u)^{r_1} b(u)^{r-r_1}& \text{if} \; a(u)\geq b(u),\\[0.25em]
            b(u)^{r_2} a(u)^{r-r_2}& \text{if} \; a(u) < b(u),
        \end{cases}
        \qquad \text{and set} \qquad
        \mathfrak{L}_{u}^{(r)} = \begin{pmatrix}
            \lvert a(u) \rvert^{r_1} \lvert b(u) \rvert^{r-r_1} & 0\\[0.25em]
            0 & \lvert b(u) \rvert^{r_2} \lvert a(u) \rvert^{r-r_2}
        \end{pmatrix}.
    \end{align*}
    Since $P_{\text{mod}}(\dim_B F) = 1$ by \cite{Fraser_2012} and $\dim_B F \leq r_1+r_2$, we need only consider the case $r \in (0,r_1+r_2]$.
    For $r > 0$,
        \begin{align*}
            \frac{a(u)^{r_1} b(u)^{r-r_1}}{b(u)^{r_2} a(u)^{r-r_2}} = \left(\frac{a(u)}{b(u)}\right)^{r_1+r_2-r}\hspace{-2.25em},
        \end{align*}
    and so for $r\in (0,r_1+r_2]$ we have $\frac{a(u)^{r_1} b(u)^{r-r_1}}{b(u)^{r_2} a(u)^{r-r_2}} \geq 1 $ when $a(u) \geq b(u)$, and $\frac{a(u)^{r_1} b(u)^{r-r_1}}{b(u)^{r_2} a(u)^{r-r_2}} < 1 $ otherwise. Thus,
        \begin{align*}
            \varphi^r_{\text{mod}}(L_u)
            = \max\left\{ a(u)^{r_1} b(u)^{r-r_1},\ b(u)^{r_2} a(u)^{r-r_2}\right\}
            = \lVert \mathfrak{L}_u^{(r)} \rVert,
        \end{align*}
    for $r\in (0,r_1+r_2]$. Following the same steps as in the proof of \Cref{theorem:MorrisInfinite},
        \begin{align*}
            \max\left\{\sum_{i\in I} \lvert a_i \rvert^{r_1} \lvert b_i \rvert^{r - r_1},\ \sum_{i\in I} \lvert b_i \rvert^{r_2} \lvert a_i \rvert^{r - r_2} \right\}
            =  \left\lvert \sum_{i\in I} \mathfrak{L}_i^{(r)} \right\rvert
            = \limsup_{m \to \infty} \left( \sum_{u \in I^m} \varphi_{\text{mod}}^r(L_u)\right)^{1/m}
            = P_{\text{mod}}(r),
        \end{align*}
    for each $r \in (0,r_1+r_2]$. The required result now follows from an application of \cite[Theorem~2.4]{Fraser_2012}.
\end{proof}

\section{\textbf{Box-counting and Hausdorff dimensions of \texorpdfstring{$F_{\!J}$}{F underscore J}}}
\label{section:1D}

Here we collect and develop results which allow us to compute the box-counting and Hausdorff dimensions of the sets $F_{\!J}$ from \eqref{q:defF1d}. These results are utilised in our proofs of \Cref{theorem:2DF,thm:dimension_spectrum}. Recall for $(s,d)\in \{0,1\}\times \mathbb{N}_{\geq 2}$ the definition of the maps $A_{s,d}^p:[0,1]^2 \to [0,1]^2$ from \eqref{q:Land}. We will be interested in their second coordinates, which are the maps $\phi_{s,d} : [0,1] \to [1/d,1/(d-1)]$ given by
 \begin{align*}
        \phi_{s,d}(x) = \frac{(-1)^{s}x}{d(d-1)} + \frac{1}{d-s}.
    \end{align*}
(Note that the maps $\phi_{0,d}$ correspond to the maps $h_d$ from the introduction.) For each $J \subseteq \{0,1\}\times \mathbb{N}_{\geq 2}$, the IFS $\Phi = \{\phi_{s,d} : (s,d)\in J\}$ on $[0,1]$ consists of similarities and its limit set is precisely the set $F_{\! J}$. It is a consequence of \cite{KM22b} that each number in $(\tfrac12,1]$ has at least one signed Lüroth expansion for which the corresponding digit pairs $(s_n,d_n)\in \{0,1\}\times \mathbb{N}_{\geq 2}$ satisfy $d_n = 2$ for all $n\in \mathbb{N}$. Consequently, if $(0,2),(1,2) \in J$, then the restricted digit set corresponding to $J$ contains $(\tfrac12,1]$, yielding 
    \begin{align}\label{eq:dimension_1D_0_2_and_1_2}
        \dim_{\mathcal{H}}(F_{\!J}) = \dim_{B}(F_{\!J}) = 1.
    \end{align}
In all other cases we split our analysis of $F_{\!J}$ into the cases where $J$ is finite and where $J$ is countably infinite.

\subsection{Restricted digit sets with finite alphabets for signed Lüroth expansions}

Throughout this section we assume that $J$ is a finite subset of $\{0,1\}\times \mathbb{N}_{\geq 2}$ and that $\{ (0,2), (1,2)\} \not\subseteq J$. Since the set $F_{\!J}$ is self-similar it follows from \cite[Theorem~5.3(1)]{Hut1981} that, in case the IFS $\Phi$ satisfies the OSC, the Hausdorff and box-counting dimensions of $F_{\!J}$ equal the unique $r \in \mathbb{R}_{> 0}$ satisfying
    \begin{align}\label{eq:theorem:1DfiniteJ}
        \sum_{(s,d) \in J} \left(\frac{1}{d(d-1)}\right)^{r} = 1.
    \end{align}
In particular, one can show that the OSC is satisfied when $(0,2),(1,2) \not \in J$ with feasible open set $(0,\tfrac{1}{2})$. Below we discuss examples of sets $J$ containing just one of $(0,2)$ or $(1,2)$ where the OSC is satisfied, \Cref{ex:OSCsatisfied}, and where the OSC is not satisfied, \Cref{ex:OSCnotsatisfied}.

\begin{ex} \label{ex:OSCsatisfied}
    Let $d\in \mathbb{N}_{\geq 3}$ and consider the set $J = \{(0,2),(0,d),(1,d)\}$. The IFS $\{\phi_{s,d} : (s,d)\in J\}$ satisfies the OSC with feasible open set $U = \bigcup_{k=0}^\infty \phi_{0,2}^k((\tfrac1d,\tfrac1{d-1}))$, but neither $(0,1)$ nor $(0,1/2)$ are feasible open sets. Here $\phi_{0,2}^0$ is defined to be the identity. To see that $U$ is a feasible open set for the OSC, we observe that by construction $\phi_{0,2}(U) \subseteq U$, and that $\phi_{s,d}(U)\subseteq \phi_{s,d}((0,1)) = (\tfrac1d,\tfrac1{d-1}) \subseteq U$ for $s\in \{0,1\}$. It remains to show that $\phi_{0,2}(U)$, $\phi_{0,d}(U)$ and $\phi_{1,d}(U)$ are pairwise disjoint. Since $\phi_{0,2}(U) \subseteq (\tfrac12,1)$, and since $\phi_{0,d}(U)$ and $\phi_{1,d}(U)$ are subsets of $(\tfrac1d,\tfrac1{d-1})\subseteq (0,\tfrac12)$, it suffices to verify that $\phi_{0,d}(U)\cap \phi_{1,d}(U) = \emptyset$. To this end, we define $U_1 = U\cap [0,\tfrac12] = (\tfrac1d,\tfrac1{d-1})$ and $U_2 = U\cap [\tfrac12,1] = \bigcup_{k=1}^\infty \phi_{0,2}^k((\tfrac1d,\tfrac1{d-1}))$. Observe that the injective maps $\phi_{0,d}$ and $\phi_{1,d}$ satisfy $\phi_{0,d}(x) = \phi_{1,d}(y)$ if and only if $y = 1-x$, in which case exactly one of $x$ and $y$ is an element of $U_1$ and exactly one of $x$ and $y$ is an element of $U_2$. As such, $\phi_{0,d}(U)\cap \phi_{1,d}(U) = \emptyset$ if and only if $U_2 \cap (1-U_1) = \emptyset$, where by $1-U_1$ we mean the open interval $(1-\tfrac{1}{d-1}, 1 - \tfrac{1}{d})$. Recalling that $\phi_{0,2}(x) = \tfrac12 x + \tfrac12$ for $x\in [0,1]$, we have $\phi_{0,2}^k(x) =  \frac1{2^k} x + \sum_{j=1}^k \frac{1}{2^j}$ for $k\in \mathbb{N}$. Thus, for $k \in \mathbb{N}$, we have that $\phi_{0,2}^{k}(\tfrac{1}{d-1}) < \phi_{0,2}^{k+1}(\tfrac{1}{d})$ if and only if $d^{2} - 2d - 1 > 0$, but this latter inequality holds since $d \geq 3$. This in tandem with the fact that $\phi_{0,2}$ is strictly increasing implies that, for $k, l \in \mathbb{N}$ with $k > l$, the open intervals $\phi_{0,2}^k((\tfrac1d,\tfrac1{d-1}))$ and $\phi_{0,2}^l((\tfrac1d,\tfrac1{d-1}))$ are disjoint, and if $x \in \phi_{0,2}^k((\tfrac1d,\tfrac1{d-1}))$ and $y \in \phi_{0,2}^l((\tfrac1d,\tfrac1{d-1}))$, then $x > y$. It remains to show that $1-U_1$ lies strictly in between two such consecutive intervals.

    For this, set $k = \lceil \log_2(d-1)\rceil - 1 \in\mathbb{N}$, and note that for this $k$ we have $\frac{1}{2^{k+1}} \leq \frac{1}{2^{k+1}} \frac{2^{k+1} + 1}{d} = (1+\frac{1}{2^{k+1}})\frac1d$, and hence $1-\frac1d \leq  \frac{1}{2^{k+1} d} + 1 - \frac{1}{2^{k+1}}$. Since $d-1$ is an integer, we have $k = \lceil \log_2(d-2 + 1)\rceil - 1 = \lfloor \log_2(d-2)\rfloor$ and so we also have $2^k\leq d-2$. Equivalently, we have $\frac1{d-1} \leq \frac{1}{2^k+1} = \frac{2^k}{2^k(2^k+1)} = \frac{1}{2^k}(\frac1{2^k}+1)^{-1}$, or $(\frac1{2^k}+1)\frac{1}{d-1} \leq \frac{1}{2^k}$. This yields
        \begin{align*}
            \phi_{0,2}^k\left(\frac{1}{d-1}\right)
            = \frac{1}{2^k(d-1)} + 1 - \frac{1}{2^k} \leq 1 - \frac{1}{d-1} < 1 - \frac{1}{d} \leq \frac{1}{2^{k+1} d} + 1 - \frac{1}{2^{k+1}}
            = \phi_{0,2}^{k+1}\left(\frac{1}{d}\right),
        \end{align*}
    from which we conclude that $1-U_1$ lies strictly between $\phi_{0,2}^k((\tfrac1d, \tfrac1{d-1}))$ and $\phi_{0,2}^{k+1}((\tfrac1d, \tfrac1{d-1}))$ and hence $(1-U_1) \cap U_2 = \emptyset$.
    \end{ex}

\begin{ex}\label{ex:OSCnotsatisfied}
    By \cite[Theorem~5.3(1)]{Hut1981}, if $J$ is such that $\sum_{(s,d)\in J} \frac{1}{d(d-1)} > 1$, then since $F_{\!J} \subseteq [0,1]$, the OSC is not satisfied. This is the case, for instance, when
    $J$ contains as a strict subset either $\{(0,2),(0,3),(1,3),(0,4),(1,4)\}$ or $\{(1,2),(0,3),(1,3),(0,4),(1,4)\}$.
\end{ex}

We conclude this section by considering the non-autonomous setting in the case both digits $(0,2)$ and $(1,2)$ are omitted entirely. For $\mathbb{J} = (J_k)_{k\in \mathbb{N}}$ a sequence of finite subsets $J_k \subseteq \{0,1\}\times \mathbb{N}_{\geq 3}$, let $F_{\! \mathds J}$ denote the limit set of the NSIFS $(\{\phi_{s,d} : (s,d)\in J_k\})_{k\in \mathbb{N}}$ acting on $[0,\tfrac12]$. The set $F_{\! \mathds J}$ coincides with a generalised type of restricted digit set
    \begin{align*}
        F_{\! \mathds J} = \{ x \in [0,1] :  x\ \text{has a signed Lüroth expansion with digits}\ (s_k,d_k)\ \text{in} \; J_k\ \text{for each}\ k \in \mathbb{N}\}.
    \end{align*}
Such sets are of particular interest in relation to various questions on the growth rate of the digits $d_k$, as studied for Lüroth expansions in for instance \cite{JR12,CWW13,AG21}. We obtain the following result.

\begin{prop} \label{theorem:Nonautonomous1D}
    If the sequence $\mathds J = (J_k)_{k\in \mathbb{N}}$ with $J_k \subseteq \{0,1\}\times \mathbb{N}_{\geq 3}$ is of \emph{sub-exponential growth}, that is, each set $J_k$ is finite and $\lim_{k \to \infty} \frac1k \log \#J_k = 0$, then
        \begin{align*}
            \dim_{\mathcal{H}} (F_{\!\mathds J}) = \inf\left\{ r\in (0,1] : \liminf_{n \to \infty} \frac1n \sum_{k=1}^n \log\left(\sum_{(s,d)\in J_k} \left(\frac1{d(d-1)}\right)^r \right) <  0\right\}.
        \end{align*}
\end{prop}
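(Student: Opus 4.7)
The plan is to realise $F_{\mathds J}$ as the limit set of the NSIFS $\Phi = (\Phi^{(k)})_{k \in \mathbb{N}}$ with $\Phi^{(k)} = \{\phi_{s,d} : (s,d) \in J_k\}$ acting on $X = [0, \tfrac12]$, verify that it meets the hypotheses of the non-autonomous Bowen-type formula from \cite{rempe2016non}, and then observe that the resulting pressure functional simplifies to the expression in the statement because every $\phi_{s,d}$ is a similarity of ratio $\frac{1}{d(d-1)}$.

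The two hypotheses to verify are the \emph{uniform contraction condition} and the \emph{OSC}. For the first, since every $(s,d) \in \bigcup_k J_k$ has $d \geq 3$, each contraction ratio $\tfrac{1}{d(d-1)}$ is bounded above by $\tfrac16$, so one may take $\eta = \tfrac16$. For the OSC, I would use the common feasible open set $U = (0,\tfrac12) = \operatorname{int}(X)$. Observe that $\phi_{s,d}(U) \subseteq (\tfrac1d, \tfrac1{d-1}) \subseteq U$ whenever $d \geq 3$; a short calculation gives $\phi_{0,d}(U) = (\tfrac1d, \tfrac{2d-1}{2d(d-1)})$ and $\phi_{1,d}(U) = (\tfrac{2d-1}{2d(d-1)}, \tfrac1{d-1})$, which are tangent but have disjoint interiors, while for distinct $d, d' \in \mathbb{N}_{\geq 3}$ the cylinders $(\tfrac1d, \tfrac1{d-1})$ and $(\tfrac1{d'}, \tfrac1{d'-1})$ are themselves pairwise disjoint. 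Hence $\Phi^{(k)}$ satisfies the OSC with feasible open set $U$ for every $k$.

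With these hypotheses in place, the non-autonomous Bowen-type formula from \cite{rempe2016non} identifies $\dim_{\mathcal H}(F_{\mathds J})$ with $\inf\{r > 0 : \underline P(r) < 0\}$, where the lower non-autonomous topological pressure is
\begin{align*}
    \underline P(r) = \liminf_{n \to \infty} \frac{1}{n} \sum_{k=1}^n \log \sum_{(s,d) \in J_k} \left(\frac{1}{d(d-1)}\right)^r,
\end{align*}
the factorisation of the length-$n$ sum being immediate from the product structure together with each $\phi_{s,d} \in \Phi^{(k)}$ being a similarity. The restriction $r \in (0,1]$ in the infimum follows from the trivial upper bound $\dim_{\mathcal H}(F_{\mathds J}) \leq 1$. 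I expect the main obstacle to be matching up with the precise hypotheses of the Bowen-type theorem in \cite{rempe2016non}: the sub-exponential growth condition $\lim_k \tfrac1k \log \# J_k = 0$ is exactly what is needed to guarantee that the alphabet cardinality contributes $o(n)$ to the exponential rate in $n$, so that $\underline P(r)$ is the correct quantity to equate to zero and no correction term from alphabet growth is needed.
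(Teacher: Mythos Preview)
Your proposal is correct and follows essentially the same approach as the paper: both apply \cite[Theorem~1.1]{rempe2016non} after checking conformality and uniform contraction (via the bound $\tfrac{1}{d(d-1)} \leq \tfrac16$), and then factorise the pressure sum using the similarity structure. You are in fact slightly more thorough than the paper's own proof, which does not explicitly verify the OSC on $(0,\tfrac12)$ or comment on the role of the sub-exponential growth hypothesis and the restriction to $r\in(0,1]$.
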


\begin{proof}
    Since each $\phi_{s,d}$ is a similarity, the IFS $\{\phi_{s,d} : (s,d)\in J_k\}$ is conformal. Moreover, $\lvert\phi'_{s,d}(x)\rvert = \frac{1}{d(d-1)} \leq \frac{1}{6} < 1$ for each $x\in [0,\tfrac12]$ and hence $(\{\phi_{s,d} : (s,d)\in J_k\})_{k\in \mathbb{N}}$ is uniformly contracting. Therefore, by \cite[Theorem~1.1]{rempe2016non}, the Hausdorff dimension of $F_{\! \mathds J}$ equals $\inf\{r > 0 : \underline{P}(r) < 0\}$. Here $\underline P$ is the \emph{lower pressure function} defined by
        \begin{align*}
            \underline P(r) = \liminf_{m \to \infty}  \frac1m \log \sum_{(s_1,d_1)\in J_1,\ldots,(s_m,d_m)\in J_m} \lVert (\phi_{s_1,d_1} \circ \cdots \circ \phi_{s_m,d_m})'\rVert_\infty^r,
        \end{align*}
    and $\lVert \cdot \rVert_{\infty}$ denotes the supremum norm. For $(s_1,d_1)\in J_1$, $\ldots$, $(s_m,d_m)\in J_m$ it holds that 
        \begin{align*}
            \lVert (\phi_{s_1,d_1} \circ \cdots \circ \phi_{s_m,d_m})' \rVert_\infty = \prod_{k=1}^m \frac{1}{d_k(d_k-1)}
        \end{align*}        
    and so the lower pressure function becomes
        \begin{align*}
            \underline P(r)
            &= \liminf_{m \to \infty}  \frac1m \log \sum_{(s_1,d_1)\in J_1,\ldots,(s_m,d_m)\in J_m} \prod_{k=1}^m \left(\frac{1}{d_k(d_k-1)}\right)^r \\
            &= \liminf_{m \to \infty}  \frac1m \log \left( \prod_{k=1}^m \sum_{(s,d)\in J_k} \left(\frac{1}{d(d-1)}\right)^r \right)\\
            &= \liminf_{m \to \infty}  \frac1m \sum_{k=1}^m \log \left( \sum_{(s,d)\in J_k} \left(\frac{1}{d(d-1)}\right)^r \right). \qedhere
        \end{align*}
\end{proof}

For each subset $J\subseteq \{0,1\}\times \mathbb{N}_{\geq 2}$ and each sequence $\mathbf s = (s_k)_{k\in \mathbb{N}} \in \{0,1\}^{\mathbb{N}}$ we define the subset
$F_{\!J,\mathbf s}$ of $F_{\!J}$ containing the numbers $x\in [0,1]$ for which there exists a sequence $(d_k)_{k\in \mathbb{N}} \in \mathbb{N}_{\geq 2}^{\mathbb{N}}$ such that $((s_k,d_k))_{k\in \mathbb{N}}$ lies in $J^{\mathbb{N}}$ and gives a signed Lüroth expansion for $x$. 

\begin{prop} \label{cor:fibresforsequences}
    Let $I_{0}$ and $I_{1}$ be finite subsets of $\mathbb{N}_{\geq 2}$, let $p\in (0,1)$ and let $\mu_p$ denote the $p$-Bernoulli measure on $\{0,1\}^{\mathbb{N}}$ with $\mu_{p}(\{ \mathbf{s} = (s_{1},s_{2},\dots) \in \{0,1\}^{\mathbb{N}} : s_{1}=0 \}) = p$. If $J = (\{0\}\times I_0)\cup(\{1\}\times I_1)$, then for $\mu_p$-almost every sequence $\mathbf s\in \{0,1\}^{\mathbb{N}}$ it holds that $\dim_{\mathcal H}(F_{J,\mathbf s}) = t$, where $t \in \mathbb{R}_{> 0}$ is the unique solution to 
        \begin{align}\label{eq:prop_Bernoulli}
            \left(\sum_{d_0\in I_0} \left(\frac{1}{d_0(d_0-1)}\right)^t\right)^p \left(\sum_{d_1\in I_1} \left(\frac{1}{d_1(d_1-1)}\right)^t\right)^{1-p} =  1.
        \end{align}
\end{prop}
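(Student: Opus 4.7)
The approach is to regard $F_{J,\mathbf{s}}$, for fixed $\mathbf{s} \in \{0,1\}^{\mathbb{N}}$, as the limit set of a non-autonomous self-similar IFS, apply a dimension formula of non-autonomous type, and then invoke the strong law of large numbers to evaluate the resulting expression $\mu_p$-almost surely.

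Fix $\mathbf{s} = (s_k)_{k \in \mathbb{N}} \in \{0,1\}^{\mathbb{N}}$ and consider the NSIFS $\Phi_{\mathbf{s}} = (\Phi^{(k)}_{\mathbf{s}})_{k \in \mathbb{N}}$ on $[0,1]$ defined by $\Phi^{(k)}_{\mathbf{s}} = \{\phi_{s_k,d} : d \in I_{s_k}\}$. By construction $F_{J,\mathbf{s}}$ is the limit set of $\Phi_{\mathbf{s}}$. Each $\phi_{s_k,d}$ is a similarity with contraction ratio $1/(d(d-1)) \leq 1/2$, so $\Phi_{\mathbf{s}}$ is conformal and uniformly contracting; at every step $k$ the open images $\phi_{s_k,d}((0,1)) = (1/d, 1/(d-1))$, $d \in I_{s_k}$, are pairwise disjoint, so the non-autonomous OSC holds; and the bound $\#\Phi^{(k)}_{\mathbf{s}} \leq \max\{\lvert I_0 \rvert, \lvert I_1 \rvert\}$ trivially yields sub-exponential growth. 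The argument of \Cref{theorem:Nonautonomous1D}, carried out on $[0,1]$ rather than on $[0,\tfrac12]$ to accommodate the digit $d=2$, then gives
\begin{equation*}
    \dim_{\mathcal H}(F_{J,\mathbf{s}}) = \inf\bigl\{r \in (0,1] : G_{\mathbf{s}}(r) < 0\bigr\}, \qquad G_{\mathbf{s}}(r) := \liminf_{n \to \infty} \frac{1}{n}\sum_{k=1}^{n} \log \sigma_{s_k}(r),
\end{equation*}
where $\sigma_s(r) := \sum_{d \in I_s} (1/(d(d-1)))^{r}$.

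Next we evaluate $G_{\mathbf{s}}$ $\mu_p$-almost surely. Under $\mu_p$ the coordinates $s_k$ are i.i.d.\ Bernoulli, so for each fixed $r > 0$ the random variables $\log \sigma_{s_k}(r)$ are bounded and i.i.d.\ with mean $H(r) := p\log \sigma_0(r) + (1-p)\log \sigma_1(r)$; the strong law of large numbers gives $G_{\mathbf{s}}(r) = H(r)$ for $\mu_p$-a.e.\ $\mathbf{s}$. A countable intersection over $r \in \mathbb{Q} \cap (0,1]$ produces a set $\Omega^*$ of full $\mu_p$-measure on which this equality holds simultaneously at every rational $r$. The function $H$ is continuous and strictly decreasing (since each $\sigma_s$ is), and for every $\mathbf{s}$ the partial averages $\frac{1}{n}\sum_{k=1}^{n} \log \sigma_{s_k}(r)$ are strictly decreasing in $r$, so $G_{\mathbf{s}}$ is non-increasing in $r$; a sandwiching argument using density of rationals upgrades the equality to $G_{\mathbf{s}}(r) = H(r)$ for every $r \in (0,1]$ and every $\mathbf{s} \in \Omega^*$.

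Finally, the equation $H(t) = 0$ is precisely \eqref{eq:prop_Bernoulli}, so strict monotonicity of $H$ identifies $\inf\{r \in (0,1] : H(r) < 0\}$ with the unique solution $t$, giving $\dim_{\mathcal H}(F_{J,\mathbf{s}}) = t$ for every $\mathbf{s} \in \Omega^*$. The main technical point is the extension of \Cref{theorem:Nonautonomous1D} to the relaxed setting $I_s \subseteq \mathbb{N}_{\geq 2}$ on $[0,1]$; the key observation is that within each step only one sign $s_k$ is used, so the OSC at step $k$ still follows from disjointness of the intervals $(1/d, 1/(d-1))$ for $d \in I_{s_k}$, and the remaining ingredients (conformality, uniform contraction, sub-exponential growth) are unchanged. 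A secondary concern is the interchange of ``for $\mu_p$-a.e.\ $\mathbf{s}$'' with ``for all $r$'', which is handled cleanly by the countability-of-rationals argument combined with the monotonicity of both $H$ and $G_{\mathbf{s}}$.
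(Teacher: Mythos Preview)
Your proof is correct and follows the same overall strategy as the paper: identify $F_{J,\mathbf{s}}$ with the limit set of an NSIFS, apply the non-autonomous dimension formula (the argument of \Cref{theorem:Nonautonomous1D}, adapted to $[0,1]$ so as to allow the digit $d=2$), and then use a law of large numbers to evaluate the lower pressure $\mu_p$-almost surely. The one noteworthy difference is in how you handle the interchange of the quantifiers ``for all $r$'' and ``for $\mu_p$-a.e.\ $\mathbf{s}$''. You apply the SLLN separately at each rational $r$, intersect over $\mathbb{Q}\cap(0,1]$, and then sandwich using monotonicity. The paper instead exploits that $s_k$ takes only two values and rewrites the partial average as
\[
\frac{1}{n}\sum_{k=1}^n \log\sigma_{s_k}(r)
= \frac{\tau_0(\mathbf{s},n)}{n}\,\log\sigma_0(r) + \Bigl(1-\frac{\tau_0(\mathbf{s},n)}{n}\Bigr)\log\sigma_1(r),
\]
where $\tau_0(\mathbf{s},n) = \#\{k\le n : s_k=0\}$. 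Here the randomness enters only through the single ratio $\tau_0(\mathbf{s},n)/n$, so a single application of the Birkhoff Ergodic Theorem (equivalently, the SLLN) produces one full-measure set on which the limit equals $H(r)$ simultaneously for every $r$, with no additional sandwiching needed. Your route is valid, just slightly longer; the paper's decomposition is the cleaner shortcut.
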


\begin{proof}
    Let $\mathbf s = (s_k)_{k\in \mathbb{N}} \in \{0,1\}^{\mathbb{N}}$ and set $\mathds J = (J_k)_{k\in \mathbb{N}}$ where $J_k = \{s_k\}\times I_{s_k}$ for $k\in \mathbb{N}$.
    By construction the set $F_{\!J,\mathbf s}$ coincides with the set $F_{\!\mathds J}$. Following similar arguments to \Cref{theorem:Nonautonomous1D}, we have
        \begin{align*}
            \dim_{\mathcal{H}} (F_{\!J,\mathbf s}) = \inf\left\{ r\in (0,1] : \liminf_{n \to \infty} \frac1n \sum_{k=1}^n \log\left(\sum_{d\in I_{s_k}} \left(\frac1{d(d-1)}\right)^r \right) <  0\right\}.
        \end{align*}
    For $n\in \mathbb{N}$ define $\tau_0(\mathbf s,n) = \#\{1\leq k \leq n : s_k = 0\}$ and observe
        \begin{align*}
            \frac1n \sum_{k=1}^n \log\left(\sum_{d\in I_{s_k}} \left(\frac1{d(d-1)}\right)^r\right) &=  \frac{\tau_0(\mathbf s,n)}n \log\left(\sum_{d_0\in I_{0}} \left(\frac1{d_0(d_0-1)}\right)^r\right) + \frac{n - \tau_0(\mathbf s,n)}n \log\left(\sum_{d_1\in I_{1}} \left(\frac1{d_1(d_1-1)}\right)^r\right).
        \end{align*}
    Applying the Birkhoff Ergodic Theorem, where the dynamics is driven by the left-shift map on $\{0,1\}^\mathbb{N}$ and where we take the indicator function on the set $\{ \mathbf{s} = (s_{1},s_{2},\dots) \in \{0,1\}^{\mathbb{N}} : s_{1}=0 \}$ for the observable, we obtain $\lim_{n \to \infty} \frac{\tau_0(\mathbf s,n)}n = p$ for $\mu_p$-almost every $\mathbf{s}\in \{0,1\}^{\mathbb{N}}$. Hence, for such $\mathbf s$,
        \begin{align*}
            \dim_{\mathcal{H}} (F_{\!J,\mathbf s})
            &= \inf\left\{ r\in (0,1] :  p \log\left(\sum_{d_0\in I_{0}} \left(\frac1{d_0(d_0-1)}\right)^r\right) + (1-p) \log\left(\sum_{d_1\in I_{1}} \left(\frac1{d_1(d_1-1)}\right)^r\right) <  0\right\}\\
            &= \inf\left\{ r\in (0,1] :  \left(\sum_{d_0\in I_0} \left(\frac{1}{d_0(d_0-1)}\right)^t\right)^p \left(\sum_{d_1\in I_1} \left(\frac{1}{d_1(d_1-1)}\right)^t\right)^{1-p} <  1\right\}.\qedhere
        \end{align*}
\end{proof}

\subsection{Restricted digit sets with infinite alphabets for signed Lüroth expansions}

We first consider the case when $J\subseteq\{0,1\}\times \mathbb N_{\geq 3}$ and then turn to the case when exactly one of $(0,2)$ or $(1,2)$ lies in $J$.

\begin{theorem} \label{theorem:1DinfiniteJ}
    If $J$ is a countably infinite subset of $\{0,1\} \times \mathbb{N}_{\geq 3}$, then
        \begin{align*}
            \dim_{\mathcal{H}} (F_{\!J})
            &= \inf \left\{ r> 0 : \sum_{(s,d)\in J} \left(\frac{1}{d(d-1)}\right)^{r} \leq 1 \right\}
            \;\; \text{and} \\
            \dim_P(F_{\!J}) &= \overline{\dim}_B(F_{\!J}) = \max\left\{\dim_{\mathcal{H}} (F_{\!J}), \overline{\dim}_B\left(\left\{\frac{1}{d-s} : (s,d)\in J\right\}\right)\right\}.
        \end{align*}
\end{theorem}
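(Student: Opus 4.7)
The plan is to recognise the IFS $\Phi = \{\phi_{s,d} : (s,d) \in J\}$ as an infinite self-similar system on $[0,\tfrac{1}{2}]$ satisfying the open set condition and then to invoke the Mauldin and Urba\'nski framework of \cite{mauldin1996dimensions}. For the OSC I would take $U = (0, \tfrac{1}{2})$: since $d \geq 3$ for every $(s,d)\in J$, each image $\phi_{s,d}([0,1]) = [1/d, 1/(d-1)]$ lies in $(0, \tfrac{1}{2}]$, cylinders with distinct $d$ are interior-disjoint, and for a shared $d$ the identity $\phi_{0,d}(x) = \phi_{1,d}(1-x)$ forces $\phi_{0,d}((0,\tfrac{1}{2}))$ and $\phi_{1,d}((0,\tfrac{1}{2}))$ into opposite halves of $(1/d, 1/(d-1))$. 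With the OSC verified, \cite[Theorem~3.15]{mauldin1996dimensions} identifies $\dim_{\mathcal{H}}(F_{\!J})$ with the zero of the topological pressure; for the self-similar ratios $c_{s,d} = 1/(d(d-1))$ this collapses to $\inf\{r > 0 : \sum_{(s,d)\in J} (1/(d(d-1)))^r \leq 1\}$.

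Write $h = \dim_{\mathcal{H}}(F_{\!J})$, $S = \{1/(d-s) : (s,d) \in J\}$, and $\beta = \overline{\dim}_B(S)$. The lower bound $\overline{\dim}_B(F_{\!J}) \geq \max\{h, \beta\}$ has two ingredients. First, $\overline{\dim}_B(F_{\!J}) \geq h$ is automatic. Second, since $J$ is infinite with $d \geq 3$ throughout, there is a sequence $(s_n,d_n)\in J$ with $d_n\to\infty$, so for any fixed $x \in F_{\!J}$ we have $\phi_{s_n,d_n}(x)\in [1/d_n, 1/(d_n-1)] \to 0$; hence $0 \in \overline{F_{\!J}}$, and by continuity of each $\phi_{s,d}$, $1/(d-s) = \phi_{s,d}(0) \in \phi_{s,d}(\overline{F_{\!J}}) \subseteq \overline{F_{\!J}}$ for every $(s,d)\in J$. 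Thus $S \subseteq \overline{F_{\!J}}$, which gives $\beta \leq \overline{\dim}_B(\overline{F_{\!J}}) = \overline{\dim}_B(F_{\!J})$.

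The main obstacle is the matching upper bound $\overline{\dim}_B(F_{\!J}) \leq \max\{h,\beta\}$. Fix $t > \max\{h, \beta\}$; then $N_\eta(S) \lesssim \eta^{-t}$ uniformly in $\eta \in (0,1]$ and $\sum_{(s,d)\in J} c_{s,d}^t < 1$. For $\delta\in (0,1)$ I would employ a stopping-time cover: call a word $u = (u_1,\ldots,u_n) \in J^+$ a \emph{leaf} when $c_u \leq \delta < c_{u^-}$, where $c_u = \prod_k c_{u_k}$ and $u^- = (u_1,\ldots,u_{n-1})$, so that $F_{\!J} = \bigcup_{u \text{ leaf}} \phi_u(F_{\!J})$ and every $\phi_u(F_{\!J})$ has diameter at most $\delta$; such a leaf exists along every infinite coding because $c_{s,d} \leq 1/6$. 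Group leaves by their parent $v$ (with $c_v > \delta$): the leaf cylinders $\phi_v([1/d, 1/(d-1)])$ with $(s,d)\in J$ and $c_{s,d} \leq \delta/c_v$ each have diameter at most $\delta$ and contain the point $\phi_v(1/(d-s)) \in \phi_v(S)$. A $(\delta/c_v)$-cover of $S$ pushed through $\phi_v$ and slightly inflated therefore yields a $\delta$-cover of these leaves of cardinality $\lesssim N_{\delta/c_v}(S) \lesssim (\delta/c_v)^{-t}$. Summing,
\begin{align*}
    N_\delta(F_{\!J})
    \lesssim \sum_{v:\, c_v > \delta} (\delta/c_v)^{-t}
    \leq \delta^{-t} \sum_{v \in J^+ \cup \{\emptyset\}} c_v^t
    = \delta^{-t} \sum_{n \geq 0} \Bigl( \sum_{(s,d)\in J} c_{s,d}^t \Bigr)^n,
\end{align*}
and the geometric series converges because $\sum_{(s,d)\in J} c_{s,d}^t < 1$. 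Hence $\overline{\dim}_B(F_{\!J}) \leq t$, and letting $t \searrow \max\{h, \beta\}$ gives the claimed upper bound.

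Finally, $\dim_P(F_{\!J}) = \overline{\dim}_B(F_{\!J})$ follows from \cite[Theorem~3.1]{mauldin1996dimensions} applied to the bi-Lipschitz similarities $\phi_{s,d}$, in the spirit of the final step of the proof of \Cref{prop:lower bound}\eqref{item:prop:lower_bound_2}.
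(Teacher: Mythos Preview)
Your proof is correct. For the Hausdorff dimension and for the equality $\dim_P(F_J)=\overline{\dim}_B(F_J)$ you follow essentially the same route as the paper: restrict the system to $[0,\tfrac12]$, verify the OSC with feasible open set $(0,\tfrac12)$ (your verification via $\phi_{0,d}(x)=\phi_{1,d}(1-x)$ is in fact more explicit than the paper's ``one readily checks''), and then invoke the Mauldin--Urba\'nski pressure formalism and \cite[Theorem~3.1]{mauldin1996dimensions}.

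The genuine difference is in establishing the formula $\overline{\dim}_B(F_J)=\max\{h,\beta\}$. The paper obtains this by a direct citation of \cite[Theorem~2.11]{MU02}, treating it as a black box. You instead give a self-contained argument: the lower bound via $S\subseteq\overline{F_J}$ (using that $0\in\overline{F_J}$ and that each $\phi_{s,d}$ maps $\overline{F_J}$ into itself), and the upper bound via a stopping-time cover in which leaves are grouped by their parent $v$ and covered using a rescaled cover of $S$, exploiting that each leaf cylinder is an interval of length $\le\delta$ anchored at a point of $\phi_v(S)$. The geometric series then closes because $\sum_{(s,d)\in J}c_{s,d}^t<1$ for $t>h$. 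This route is more elementary and makes transparent exactly why the orbit of $0$ under the first-level maps controls the box-counting surplus over $h$; the price is a page of covering estimates that the citation avoids. One minor imprecision: pushing a $(\delta/c_v)$-cover of $S$ through $\phi_v$ and inflating gives a $3\delta$-cover of the leaf cylinders rather than a $\delta$-cover, but this is absorbed by the $\lesssim$ and is irrelevant for the box-counting limit.
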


\begin{proof}
    For any $(s,d)\in J$ we have $\phi_{s,d}([0,1]) = [\tfrac1d,\tfrac1{d-1}] \subseteq [0,\tfrac12]$ and so the limit set of the IFS $\{\phi_{s,d} : (s,d)\in J\}$ on $[0,1]$ coincides with that of the restricted IFS $\{\phi_{s,d}\vert_{[0,1/2]} : (s,d)\in J\}$ on $[0,\tfrac12]$ and satisfies the OSC with feasible open set $(0,\tfrac12)$. One readily checks that the restricted IFS satisfies the conditions of \cite[Corollary~3.17]{mauldin1996dimensions} and thus that $\dim_{\mathcal{H}} (F_{\!J}) = \inf\{ r> 0 : P(r) \leq 0\}$. Here $P\colon \mathbb{R}_{> 0} \to \mathbb{R} \cup \{\infty\}$ is the \emph{pressure function} defined, for $r > 0$, by
        \begin{align*}
            P(r) = \lim_{m \to \infty}  \frac1m \log \left( \sum_{(s_1,d_1),\ldots,(s_m,d_m)\in J} \lVert (\phi_{s_1,d_1} \circ \cdots \circ \phi_{s_m,d_m})'\rVert_{\infty}^r \right).
        \end{align*}
    Since $\lVert(\phi_{s_1,d_1} \circ \cdots \circ \phi_{s_m,d_m})'\rVert_{\infty} = \prod_{k=1}^m \frac{1}{d_k(d_k-1)}$ for any $(s_1,d_1),\ldots,(s_m,d_m)\in J$, this becomes
        \begin{align}\label{eq:original_pressure}
            \begin{aligned}
                P(r)
                &= \lim_{m \to \infty}  \frac{1}{m} \log \sum_{(s_1,d_1),\ldots,(s_m,d_m)\in J} \prod_{k=1}^m \left(\frac{1}{d_k(d_k-1)}\right)^r\\
                &= \lim_{m \to \infty}  \frac1m \log \left( \sum_{(s,d)\in J} \left(\frac{1}{d(d-1)}\right)^r \right)^m 
                = \log \sum_{(s,d)\in J} \left(\frac{1}{d(d-1)}\right)^r.
            \end{aligned}
        \end{align}
    Therefore, $P(r) \leq 0$ if and only if $\sum_{(s,d)\in J} (\frac{1}{d(d-1)})^{r} \leq 1$, yielding the result for the Hausdorff dimension.
    
    The equality of the packing and upper box-counting dimensions follows from \cite[Theorem~3.1]{mauldin1996dimensions} and the formula for the upper box-counting dimension is a consequence of \cite[Theorem~2.11]{MU02}. 
    \end{proof}

\begin{remark}\label{rem:intermediate}
    \Cref{theorem:1DinfiniteJ} in tandem with \cite[Theorem~3.5 and Corollary~3.6]{BF2023} yields that if $J$ is a countably infinite subset of $\{0,1\} \times \mathbb{N}_{\geq 3}$, then for $\theta \in [0,1]$,
        \begin{align*}
            \max\left\{\dim_{\mathcal{H}} (F_{\!J}),\
            \underline{\dim}_{\theta}\left(\left\{\frac{1}{d-s} : (s,d)\in J\right\}\right)\right\}
            &\leq \underline{\dim}_\theta(F_{\!J})\\[-0.5em]
            &\leq \overline{\dim}_\theta(F_{\!J})
            = \max\left\{\dim_{\mathcal{H}} (F_{\!J}),\ \overline{\dim}_\theta\left(\left\{\frac{1}{d-s} : (s,d)\in J\right\}\right)\right\},
        \end{align*} 
    where $\underline{\dim}_{\theta}$ and $\overline{\dim}_{\theta}$ respectively denote the lower and upper intermediate dimensions. Moreover, applying the same analysis as in \cite[Theorem~4.3(2)]{BF2023} to the sets $F_{\!J}$ yields that the maps $\theta \mapsto \underline{\dim}_\theta(F_{\!J})$ and $\theta \mapsto \overline{\dim}_\theta(F_{\!J})$ are continuous at $\theta = 0$. For a formal definition of the lower and upper intermediate dimensions we refer the reader to \cite{MR4140764}, where they were first introduced, and where it was noted that $\underline{\dim}_{\theta}(F)= \overline{\dim}_{\theta}(F) = \dim_{\mathcal{H}}(F)$ when $\theta = 0$, and $\underline{\dim}_{\theta}(F)= \underline{\dim}_{B}(F)$ and $\overline{\dim}_{\theta}(F)= \overline{\dim}_{B}(F)$ when $\theta = 1$, for any bounded set $F \subseteq \mathbb{R}^{D}$.
\end{remark}

We treat some examples where the limit set $F_{\!J}$ has equal Hausdorff and box-counting dimensions as well as examples where the Hausdorff dimension is strictly smaller than the box-counting dimension.

\begin{ex}
    Suppose $J = (\{0,1\}\times \mathbb{N}_{\geq 3})\setminus S$ for some finite set $S$. For any $r\leq \tfrac{1}{2}$ we have
        \begin{align*}
            \sum_{(s,d)\in J} \left (\frac{1}{d(d-1)} \right)^r \geq \sum_{(s,d)\in J} \left (\frac{1}{d^2} \right)^r \geq \sum_{(s,d)\in J} \frac{1}{d} = 2\sum_{d=3}^\infty \frac{1}{d} - \sum_{(s,d)\in S} \frac{1}{d} = \infty.
        \end{align*}
    It therefore follows from \Cref{theorem:1DinfiniteJ} that $\dim_{\mathcal H}(F_{\!J}) = \inf \{ r : \sum_{(s,d)\in J} (\tfrac{1}{d(d-1)})^{r} \leq 1 \} \geq \tfrac12$. Further, we have that $\tfrac12 = \overline{\dim}_B(\{\frac{1}{n} : n\in \mathbb{N}\}) \geq \overline{\dim}_B(\{\frac{1}{d-s} : (s,d)\in J\})$, so by \Cref{theorem:1DinfiniteJ} we have $\dim_{\mathcal H}(F_{\!J}) = \dim_B(F_{\!J})$.
\end{ex}

\begin{ex}
    If $J\subseteq \{0,1\}\times \mathbb{N}_{\geq 3}$ is such that $\{\frac{1}{d-s} : (s,d)\in J\} = \{\frac{1}{n^k} : n\in \mathbb{N}_{\geq 2}\}$ for some $k\in \mathbb{N}$, then $\dim_B(\{\frac{1}{d-s} : (s,d)\in J\}) = \frac1{k+1}$, see for instance \cite[Example~3.1]{MR3236784}. \Cref{theorem:1DinfiniteJ} in tandem  with \Cref{rem:intermediate} then implies $\dim_B(F_{\!J})$ exists and equals $\max\{\dim_{\mathcal H}(F_{\!J}), \frac1{1+k}\}$.

    For instance, if $J = \{(0,n^k) : n\in \mathbb{N}_{\geq 2}\}$ for some $k \in \{ 2, 3, 4, 5, 6\}$, then $\frac{2k}{k+1} \leq \frac{2\cdot 6}{6+1} = \frac{12}7$, and so for each $r\leq \frac{1}{k+1}$,
        \begin{align*}
            \sum_{(s,d)\in J} \left( \frac{1}{d(d-1)} \right)^r 
            \geq \sum_{n=2}^\infty \left(\frac{1}{n^k(n^k-1)}\right)^{\frac{1}{k+1}} \geq \sum_{n=2}^\infty \frac{1}{n^{\frac{2k}{k+1}}}
            \geq \sum_{n=2}^\infty \frac{1}{n^{\frac{12}{7}}} 
            =\zeta\left( \frac{12}{7}\right) -1> 1,
        \end{align*}
    with $\zeta$ denoting the Riemann $\zeta$-function.
    It follows that $\dim_{\mathcal H}(F_{\!J}) \geq \frac1{k+1}$ and hence $\dim_{\mathcal H}(F_{\!J}) = \dim_B(F_{\!J})$.

    If instead we take $J = \{(1,n^k+1) : n\in \mathbb{N}_{\geq 2}\}$ for some integer $k \geq 7$, then for any $r>\frac{1}{2k}$ we have
        \begin{align*}
            \sum_{(s,d)\in J} \left( \frac{1}{d(d-1)} \right)^r = \sum_{n=2}^\infty \left(\frac{1}{(n^k+1)n^k}\right)^{r} \leq \sum_{n=2}^\infty \frac{1}{n^{2kr}}
            =\zeta\left( 2kr\right) -1.
        \end{align*}
    In particular, for any $r\geq \frac{1}{k+1} - \frac{1}{100k}$ we have that $\zeta\left( 2kr\right) -1 \leq \zeta\left(\frac{2k}{k+1} - \frac1{50}\right) - 1 \leq \zeta\left(\frac{7}4 - \frac{1}{50}\right)-1 < 1$ and hence $\dim_{\mathcal H}(F_{\!J}) \leq \frac{1}{k+1} - \frac{1}{100k} < \frac{1}{k+1} = \dim_{B}(F_{\!J})$.
\end{ex}

\begin{remark}\label{rem:infiniteOSC}
    Whenever $\{\phi_{s,d} : (s,d)\in J\}$, with $J$ countably infinite, satisfies the OSC with a feasible open set consisting of finitely many open intervals, one could attempt to show that the result of \Cref{theorem:1DinfiniteJ} holds by representing the system as an infinitely generated conformal graph-directed system in the sense of \cite{MR2003772} and by applying the results therein. However, if $J$ is infinite and contains either $(0,2)$ or $(1,2)$, but not both, the OSC can only be satisfied with feasible open set $(0,1)$ or with a  feasible open set consisting of an infinite union of disjoint open intervals. The former is the case when for each $d \in \mathbb{N}_{\geq 2}$ the alphabet $J$ contains at most one of the digits $(0,d)$ and $(1,d)$, while the latter is the case whenever there is at least one digit $d \in \mathbb{N}_{\geq 2}$ for which $(0,d),(1,d)\in J$. 
    
    To see this, note that if $\{\phi_{s,d} : (s,d) \in J\}$ satisfies the OSC with feasible open set $U$, then for each $(s,d) \in J$, the open set $U$ must have a non-empty intersection with $\phi_{s,d}([0,1]) = [\tfrac1d,\tfrac{1}{d-1}]$. Hence, assuming $J$ is infinite and $U$ is a finite union of open intervals, then one of these intervals must be of the form $(0,\varepsilon)$ for some $\varepsilon \in (0,1]$. If $(1,2)\in J$ this means $U$ must also contain the interval $\phi_{1,2}((0,\varepsilon)) = (1-\tfrac12\varepsilon, 1)$. If instead $(0,2)\in J$ then $U$ must contain $\bigcup_{k\in \mathbb{N}} \phi_{0,2}^k((0,\varepsilon)) = \bigcup_{k\in \mathbb{N}} (1-\tfrac1{2^k},\tfrac1{2^k}\varepsilon + 1 - \tfrac1{2^k})$. In both cases the assumption that $U$ is a finite union of open intervals yields that $U$ must contain intervals $(0,\varepsilon)$ and $(\delta,1)$ for some $\varepsilon \in (0,1]$ and $\delta \in [0,1)$. However, if $d\in \mathbb{N}_{\geq 2}$ is such that $(0,d),(1,d)\in J$ then $\phi_{0,d}((0,\varepsilon)) \cap \phi_{1,d}((\delta,1)) = (\tfrac1d,\tfrac1{d(d-1)}\varepsilon + \tfrac1d) \cap (\tfrac1d, \tfrac{1}{d-1}-\tfrac1{d(d-1)}\delta) \neq \emptyset$, meaning $U$ is not a feasible open set for the IFS $\{\phi_{s,d} : (s,d)\in J\}$ to satisfy the OSC.
\end{remark}

As a corollary to \Cref{theorem:1DinfiniteJ} and \cite[Corollary~6.8]{Urbanski_dimension_spectrum} we obtain in the following result that the dimension spectra of the IFSs $\{\phi_{0,d} : d \in \mathbb{N}_{\geq 2} \}$ and $\{\phi_{1,d} : d \in \mathbb{N}_{\geq 2} \}$ are full, which we utilise in the proof of \Cref{thm:dimension_spectrum}.

\begin{cor}\label{cor:DS_full_1D}
    For $s \in \{0,1\}$ we have $\{ \dim_{\mathcal{H}}(F_{\!J}) : J \subseteq \{s\} \times \mathbb{N}_{\geq 2} \} = [0,1]$.
\end{cor}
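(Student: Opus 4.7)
The plan is to apply \cite[Corollary~6.8]{Urbanski_dimension_spectrum} (a Texan-conjecture-type result for conformal IFSs) to the one-dimensional Lüroth / alternating Lüroth system, after verifying its hypotheses and computing the dimension of its full limit set.

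Fix $s\in\{0,1\}$ and consider $\Phi_s = \{\phi_{s,d}:d\in\mathbb{N}_{\geq 2}\}$ on $[0,1]$. First I would observe that $\Phi_s$ is a conformal infinite IFS, since each $\phi_{s,d}$ is a similarity with ratio $1/(d(d-1))$. Next I would verify the OSC with feasible open set $(0,1)$: indeed $\phi_{s,d}((0,1))=(1/d,1/(d-1))$ for every $d\geq 2$, and these open intervals are pairwise disjoint. The pathology described in \Cref{rem:infiniteOSC} does not arise here because $\Phi_s$ includes only one of the potentially problematic digits $(0,2)$, $(1,2)$. The remaining technical hypotheses needed (bounded distortion, a cone condition) are trivially satisfied for similarities on an interval in $\mathbb R$.

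Next I would identify the limit set of the full system $\Phi_s$. For $s=0$, the classical Lüroth algorithm produces an expansion for every $x\in(0,1]$, so $F_{\{0\}\times\mathbb{N}_{\geq 2}}=(0,1]$; for $s=1$, the alternating Lüroth algorithm of \cite{KKK90,KKK91} does the same, so $F_{\{1\}\times\mathbb{N}_{\geq 2}}=(0,1]$. In both cases $\dim_{\mathcal H}(F_{\{s\}\times\mathbb{N}_{\geq 2}})=1$. Equivalently, at the level of pressure, the computation in \eqref{eq:original_pressure} gives
\begin{align*}
    P(1)=\log\sum_{d\geq 2}\frac1{d(d-1)}=\log 1=0
\end{align*}
by telescoping, which both reconfirms that the dimension equals $1$ and shows that $\Phi_s$ is strongly regular in the sense required by \cite{Urbanski_dimension_spectrum}.

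With these verifications in place, applying \cite[Corollary~6.8]{Urbanski_dimension_spectrum} to $\Phi_s$ yields
\begin{align*}
    \{\dim_{\mathcal H}(F_{\!J}) : J\subseteq\{s\}\times\mathbb{N}_{\geq 2}\}
    =[0,\dim_{\mathcal H}(F_{\{s\}\times\mathbb{N}_{\geq 2}})]
    =[0,1],
\end{align*}
which is the desired conclusion. The main potential obstacle is a purely expository one: Urbanski's corollary is stated in the language of conformal graph-directed Markov systems, so one must view $\Phi_s$ as a one-vertex GDMS to invoke it. Once that translation is made, all hypotheses have been checked above and the corollary delivers the full spectrum.
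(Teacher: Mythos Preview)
Your overall strategy matches the paper's: reduce to \cite[Corollary~6.8]{Urbanski_dimension_spectrum} applied to the one-vertex GDMS $\Phi_s$. However, you have not verified the main hypothesis of that corollary. Strong regularity together with the OSC and the observation that the full limit set has dimension~$1$ are necessary but \emph{not} sufficient for full dimension spectrum; there are strongly regular conformal IFSs whose spectra have gaps. The specific hypothesis of \cite[Corollary~6.8]{Urbanski_dimension_spectrum} that you omit is a tail condition on the contraction ratios: after ordering the maps by decreasing ratio, one needs, for every $k$ and every $t$ below the Hausdorff dimension of the full system, that
\[
    \sum_{d\geq k+1}\left(\frac{1}{d(d-1)}\right)^{t}\ \geq\ \left(\frac{1}{k(k-1)}\right)^{t}.
\]
This is precisely what the paper checks as its condition~(i), by noting that $t\mapsto\sum_{d\geq k+1}\bigl(\tfrac{k(k-1)}{d(d-1)}\bigr)^{t}$ is monotonically decreasing on $(0,1]$ and equals $\sum_{d\geq k+1}\tfrac{k(k-1)}{d(d-1)}=k-1\geq 1$ at $t=1$. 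Your verification of $P(1)=0$ and of the OSC corresponds to the paper's condition~(ii) and the standing GDMS hypotheses, but without the tail estimate the appeal to \cite[Corollary~6.8]{Urbanski_dimension_spectrum} is incomplete. The ``purely expository'' obstacle you flag is not the real one; the missing step is this quantitative inequality.
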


\begin{proof}
    By \cite[Corollary~6.8]{Urbanski_dimension_spectrum}, it is sufficient to verify that (i) $\sum_{d \in \mathbb{N}_{\geq k+1}} \left( \frac{1}{d(d-1)} \right)^{t} \geq \left( \frac{1}{k(k-1)} \right)^{t}$, for all $k \in \mathbb{N}_{\geq 3}$ and $t \in (0,1)$ and (ii) $\inf\{t> 0 : P(t)\leq 0\}=1$ with $P$ as in \eqref{eq:original_pressure} and $J=\{s\}\times\mathbb N_{\geq 2}$. Part (i) follows from the fact that, for $k \in \mathbb{N}_{\geq 3}$, the map $t \mapsto \sum_{d \in \mathbb{N}_{\geq k+1}} \left( \tfrac{k(k-1)}{d(d-1)} \right)^{t}$, where it is well defined on $(0,1]$, is monotonically decreasing, and that $\sum_{d \in \mathbb{N}_{\geq k+1}} \tfrac{k(k-1)}{d(d-1)} = k-1 \geq 1$.
    Part (ii) follows from $t\mapsto \sum_{d \in \mathbb{N}_{\geq 2}} \left( \frac{1}{d(d-1)} \right)^{t}$ being monotonically decreasing and $\sum_{d \in \mathbb{N}_{\geq 2}} \frac{1}{d(d-1)} =1$.
\end{proof}

\section{\textbf{Hausdorff and affinity dimensions of \texorpdfstring{$\mathcal{F}^p_{\!J}$}{Calligraphic F underscore J}}}\label{s:theorem2}

We now consider the self-affine sets $\mathcal{F}^p_{\!J}$ generated by the iterated function systems $\{A^p_{s,d} : (s,d)\in J\}$ on $[0,1]^2$ for countable alphabets $J\subseteq  \{0,1\} \times \mathbb{N}_{\geq 2}$. Notably, each IFS $\{A^p_{s,d} : (s,d)\in J\}$ satisfies the ROSC and the linear part of each affine map $A^p_{s,d}$ is the diagonal matrix
    \begin{align*}
        L^p_{s,d} = \begin{pmatrix}
        p^{1-s}(1-p)^s & 0 \\
        0 & (-1)^s\frac{1}{d(d-1)}
        \end{pmatrix}.
    \end{align*}
Applying \Cref{theorem:MorrisInfinite} to this setting yields the following expression for the affinity dimension of $\{L^p_{s,d} : (s,d)\in J\}$.

\begin{prop}\label{cor:affinityLuroth}
    Let $J\subseteq \{0,1\}\times \mathbb{N}_{\geq 2}$ be a finite or countably infinite alphabet satisfying $\pi_1(J) = \{0,1\}$ and let $p\in(0,1)$ arbitrary. The affinity dimension $d(L^p_{s,d} \ \vert \ (s,d)\in J)$ of $\{L^p_{s,d} : (s,d)\in J\}$ lies in $[1,2]$ and equals
        \begin{align}\label{eq:affinity_projection}
            \inf\left\{ r \in (1,2] : \max\left\{\sum_{(s,d)\in J}p^{1-s}(1-p)^s\left(\frac{1}{d(d-1)}\right)^{r-1},\ \sum_{(s,d)\in J} (p^{1-s}(1-p)^s)^{r-1} \left(\frac{1}{d(d-1)}\right) \right\} \leq 1 \right\}.
        \end{align}
    If (a) $\sum_{(s,d)\in J} \frac{1}{d(d-1)} \leq 1$ or (b) $J = \{0,1\} \times I$ for some $I \subseteq \mathbb{N}_{\geq2}$, or (c) $p = \frac{1}{2}$, then this formula simplifies to
        \begin{align*}
            d(L^p_{s,d} \ \vert \ (s,d)\in J) = \inf\left\{r \in (1,2] : \sum_{(s,d)\in J}p^{1-s}(1-p)^s\left(\frac{1}{d(d-1)}\right)^{r-1} \leq 1\right\}.
        \end{align*}
\end{prop}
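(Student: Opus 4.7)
The plan is to apply \Cref{theorem:MorrisInfinite} to the collection $\{L^p_{s,d} : (s,d)\in J\}$, localise the relevant range of $r$ to $[1,2]$, and then establish the simplification in each of the cases (a), (b), (c) via a termwise comparison.

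Since $|a_{s,d}|=p^{1-s}(1-p)^s \leq \max\{p,1-p\}<1$ and $|b_{s,d}| = 1/(d(d-1))\leq 1/2$, the hypotheses of \Cref{theorem:MorrisInfinite} are met, so $d(L^p_{s,d} \ \vert \ (s,d)\in J) = \inf\{r>0 : P_J(r)\leq 1\}$ with $P_J$ as in \eqref{eq:pressure}. To confine the infimum to $[1,2]$, first note that $\pi_1(J)=\{0,1\}$ guarantees some $(0,d_0),(1,d_1)\in J$, so for $r\in (0,1)$ one has $P_J(r) \geq p^r + (1-p)^r > p+(1-p)=1$ (using $x^r>x$ for $x\in(0,1)$ and $r<1$); hence $d\geq 1$. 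For the upper bound, evaluate $P_J$ at $r=2$:
\begin{align*}
P_J(2) = \sum_{(s,d)\in J} p^{1-s}(1-p)^s \frac{1}{d(d-1)} \leq (p+(1-p))\sum_{d\geq 2} \frac{1}{d(d-1)}=1,
\end{align*}
so $d\leq 2$. On $(1,2]$ the middle branch of \eqref{eq:pressure} yields $P_J(r)=\max\{A(r),B(r)\}$, where $A$ and $B$ are the two sums inside \eqref{eq:affinity_projection}, giving the main formula.

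For the simplifications, I would use that $A$ and $B$ are continuous and monotonically decreasing on $[1,2]$ with $A(2)=B(2)$. Case (a) is immediate: $B(r)\leq B(1)=\sum_{(s,d)\in J}1/(d(d-1))\leq 1$ for all $r\geq 1$, so the constraint $\max\{A(r),B(r)\}\leq 1$ is equivalent to $A(r)\leq 1$ throughout $(1,2]$, and the infima coincide.

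The main work is cases (b) and (c), where I plan to show $B(r)\leq A(r)$ termwise. In case (c), each $p^{1-s}(1-p)^s=1/2$, so the desired termwise inequality
\begin{align*}
\tfrac{1}{2}(d(d-1))^{-(r-1)} \geq 2^{-(r-1)}(d(d-1))^{-1}
\end{align*}
rearranges to $(d(d-1))^{2-r}\geq 2^{2-r}$, which holds since $d(d-1)\geq 2$ and $2-r\geq 0$. In case (b), grouping the $s=0$ and $s=1$ contributions for each $d\in I$ gives $A(r)=\sum_{d\in I}(d(d-1))^{-(r-1)}$ and $B(r)=(p^{r-1}+(1-p)^{r-1})\sum_{d\in I}(d(d-1))^{-1}$, so the termwise inequality reduces to $(d(d-1))^{2-r}\geq p^{r-1}+(1-p)^{r-1}$. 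The step I expect to be the main technical point is controlling the right-hand side via concavity of $x\mapsto x^{r-1}$ on $[0,1]$ (valid since $r-1\in[0,1]$): Jensen applied at the midpoint gives $p^{r-1}+(1-p)^{r-1}\leq 2\cdot(1/2)^{r-1}=2^{2-r}$, and combining with $(d(d-1))^{2-r}\geq 2^{2-r}$ closes the argument, with the equality cases at $r=2$ (and, in case (c), also at $d=2$) lining up so no information is lost at the boundary.
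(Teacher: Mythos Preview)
Your proof is correct and follows essentially the same route as the paper: apply \Cref{theorem:MorrisInfinite}, localise to $[1,2]$ via the same two estimates, and handle the three simplification cases by termwise comparison. The only cosmetic difference is in case (b), where the paper substitutes $q=np$ and uses the first derivative test to see that $q^{r-1}+(n-q)^{r-1}$ is maximised at $q=n/2$, whereas you invoke concavity of $x\mapsto x^{r-1}$ and Jensen at the midpoint; these are the same inequality in different clothing.
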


\begin{proof}
    Since by assumption $\pi_1(J) = \{0,1\}$, we have for each $0 < r < 1$ that 
        \begin{align*}
            \max\left\{ \sum_{(s,d)\in J} (p^{1-s}(1-p)^s)^r, \sum_{(s,d)\in J} \left(\frac1{d(d-1)}\right)^r  \right\}
            \geq \sum_{(s,d)\in J} (p^{1-s}(1-p)^s)^r > \sum_{(s,d)\in J} p^{1-s}(1-p)^s
            \geq 1.
        \end{align*}
    This implies that $\inf \{ r > 0 : \max \{ \sum_{(s,d)\in J} (p^{1-s}(1-p)^s)^r, \sum_{(s,d)\in J} (\tfrac1{d(d-1)})^r \} \leq 1 \} \geq 1$, and so by \Cref{theorem:MorrisInfinite} the affinity dimension of $\{L^p_{s,d} : (s,d)\in J\}$ is at least $1$. Further, for $r\geq 2$ we have
        \begin{align*}
            \sum_{(s,d)\in J} \left(\frac{p^{1-s}(1-p)^s}{d(d-1)}\right)^{r/2} \leq \sum_{(s,d)\in J} \frac{p^{1-s}(1-p)^s}{d(d-1)} \leq \sum_{d\in \pi_2(J)} \frac{p+1-p}{d(d-1)} \leq \sum_{d\in \mathbb{N}_{\geq 2}} \frac{1}{d(d-1)} = 1.
        \end{align*}
    Thus, $\inf \{ r> 2 : \sum_{(s,d)\in J} (\frac{p^{1-s}(1-p)^s}{d(d-1)})^{r/2} \leq 1\} = 2$, and so \Cref{theorem:MorrisInfinite} implies the affinity dimension is at most $2$ and equals the quantity given in\eqref{eq:affinity_projection}.
    
    For the simplification, note that whenever $p=1/2$, we have that $p^{1-s}(1-p)^s = \frac12 \geq \frac{1}{d(d-1)}$ for all $(s,d)\in J$. Hence
        \begin{align*}
            \sum_{(s,d)\in J} p^{1-s}(1-p)^s \left(\frac1{d(d-1)}\right)^{r-1} \geq \sum_{(s,d)\in J} \left(p^{1-s}(1-p)^s\right)^{r-1}\frac1{d(d-1)}, 
        \end{align*}
    for all $r \in [1,2]$, yielding (c). When $p\in (0,1)$ and $\sum_{(s,d)\in J} \frac{1}{d(d-1)} \leq 1$, observe that for $r \in [1, 2]$, 
        \begin{align*}
            \sum_{(s,d)\in J} (p^{1-s}(1-p)^s)^{r-1} \frac{1}{d(d-1)} \leq \sum_{(s,d)\in J} \frac{1}{d(d-1)} \leq 1,
        \end{align*}
    yielding (a). For (b), by symmetry, we may assume that $p \in (0, 1/2]$. For $n\in \mathbb N_{\geq 2}$ and $r\in [1,2]$, we show that
         \begin{align}\label{eq:eq:dimension_spectra_cor_1}
            \frac{1}{n^{r-1}} \geq (p^{r-1}+(1-p)^{r-1}) \frac{1}{n}.
        \end{align}     
    This inequality holds if and only if $n \geq q^{r-1} + (n-q)^{r-1}$, where $q = np$. Let $g_{r}\colon [0,\frac{n}{2}] \to \mathbb{R}$ be defined by $g_{r}(q) = q^{r-1} + (n-q)^{r-1}$, and note, by the first derivative test, that $g_{r}$ is maximised at $q = \tfrac{n}{2}$. This implies for all $q\in (0,\tfrac{n}2]$, and hence $p\in (0,\tfrac12]$, that $q^{r-1} + (n-q)^{r-1} = g_r(q) \leq g_r(\tfrac{n}2) = 2(\tfrac{n}2)^{r-1} \leq 2\tfrac{n}2 = n$. By the assumption that $(0,d) \in J$ implies $(1,d) \in J$ and vice versa, and using \eqref{eq:eq:dimension_spectra_cor_1} with $n=d(d-1)$, we conclude
        \[
            \sum_{(s,d)\in J}p^{1-s}(1-p)^s\left(\frac{1}{d(d-1)}\right)^{r-1}
            \!=\! \sum_{d\in I}\left(\frac{1}{d(d-1)}\right)^{r-1}
            \!\geq\! \sum_{d\in I} \frac{p^{r-1} + (1-p)^{r-1}}{d(d-1)}
            \!=\! \sum_{(s,d)\in J} (p^{1-s}(1-p)^s)^{r-1}\frac{1}{d(d-1)}.\qedhere
        \]
\end{proof}

Defining the maps $f^p_0(w) = pw$ and $f^p_1(w) = (1-p)w + p$ for $w\in [0,1]$ we note that $A^p_{s,d}(w,x) = (f^p_s(w), \phi_{s,d}(x))$ for each $(w,x)\in [0,1]^2$. As such, the horizontal projection $\pi_1(\mathcal F^p_J)$ is exactly the self-similar set of the IFS $\{f_s^p : s\in \pi_1(J)\}$. In particular, whenever $\pi_1(J) = \{0,1\}$, we have $\pi_1(\mathcal F^p_J) = [0,1]$ and hence $\dim_B(\pi_1(\mathcal F^p_J)) = 1$. In the same way, the vertical projection $\pi_2(\mathcal F^p_J)$ equals $F_J$, the self-similar set of $\{\phi_{s,d} : (s,d)\in J\}$ discussed in \Cref{section:1D}. Under suitable conditions the dimension $\dim_B(\pi_2(\mathcal F^p_J))$ is given by \eqref{eq:theorem:1DfiniteJ} when $J$ is finite. With this in mind we obtain the following result whenever $(0,2),(1,2)\notin J$.

\begin{lemma}\label{lemma:modifiedLuroth}
    If $J \subseteq \{0,1\}\times \mathbb{N}_{\geq 2}$ is a finite alphabet such that either $\pi_{2}(J) \subseteq \mathbb{N}_{\geq 3} $ and $\pi_1(J) = \{0,1\}$, or $\{ (0,2), (1,2) \} \subseteq J$, then $\dim_B (\mathcal F_J^p) = \dim_P (\mathcal F_J^p) = d(L^p_{s,d} \ \vert \ (s,d)\in J)$ holds for all $p \in (0,1)$.
\end{lemma}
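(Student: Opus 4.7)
The plan is to apply \Cref{theorem:FraserSimplified} to the finite IFS $\{A^p_{s,d} : (s,d) \in J\}$ to obtain that $\dim_B(\mathcal F_J^p) = \dim_P(\mathcal F_J^p) = t$ with $t$ the unique solution of Fraser's modified pressure equation, and then to verify that this $t$ coincides with the affinity dimension $d(L^p_{s,d} \mid (s,d)\in J)$ as expressed by \Cref{cor:affinityLuroth}.

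First I would check the hypotheses of \Cref{theorem:FraserSimplified}: $J$ is finite and each $A^p_{s,d}$ has diagonal linear part, and the ROSC holds with feasible open set $(0,1)^2$ because the horizontal factors $f_0^p, f_1^p$ send $(0,1)$ to the disjoint intervals $(0,p)$ and $(p,1)$, while within a fixed $s$ the vertical factors $\phi_{s,d}$ send $(0,1)$ to the pairwise disjoint intervals $(1/d, 1/(d-1))$. Next I would compute $r_1=\dim_B(\pi_1(\mathcal F_J^p))$ and $r_2=\dim_B(\pi_2(\mathcal F_J^p))$. Since $\pi_1(J)=\{0,1\}$ in both cases, $\pi_1(\mathcal F_J^p)$ is the attractor of $\{f_0^p, f_1^p\}$, which equals $[0,1]$, hence $r_1=1$. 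For the vertical projection, Case~2 ($\{(0,2),(1,2)\}\subseteq J$) yields $\pi_2(\mathcal F_J^p)\supseteq (\tfrac12,1]$ by \eqref{eq:dimension_1D_0_2_and_1_2}, so $r_2=1$. In Case~1 ($\pi_2(J)\subseteq \mathbb{N}_{\geq 3}$) the IFS $\{\phi_{s,d}:(s,d)\in J\}$ satisfies the OSC with feasible open set $(0,\tfrac12)$ (the images $\phi_{0,d}((0,\tfrac12))$ and $\phi_{1,d}((0,\tfrac12))$ meet only at the common endpoint $(2d-1)/(2d(d-1))$, and intervals corresponding to different $d$ lie in the pairwise disjoint $[1/d,1/(d-1)]$), so by \eqref{eq:theorem:1DfiniteJ}, $r_2$ is the unique solution of $\sum_{(s,d)\in J}(1/(d(d-1)))^{r_2}=1$.

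With these values in hand I would match Fraser's equation with \Cref{cor:affinityLuroth}. In Case~2, plugging $r_1=r_2=1$ into the equation defining Fraser's $t$ gives exactly the unsimplified formula \eqref{eq:affinity_projection} for $d(L^p_{s,d}\mid (s,d)\in J)$, so $t$ equals the affinity dimension. In Case~1, since $\pi_2(J)\subseteq \mathbb{N}_{\geq 3}$ implies $\sum_{(s,d)\in J}1/(d(d-1))\leq 2\sum_{d\geq 3}1/(d(d-1))=1$, simplification~(a) of \Cref{cor:affinityLuroth} applies and $d(L^p_{s,d}\mid (s,d)\in J)=r^*$, the unique $r^*\in[1,2]$ satisfying $\sum_{(s,d)\in J} p^{1-s}(1-p)^s(1/(d(d-1)))^{r^*-1}=1$. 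It remains to check that Fraser's second term at $r^*$, namely $\sum_{(s,d)\in J} (1/(d(d-1)))^{r_2}(p^{1-s}(1-p)^s)^{r^*-r_2}$, is at most $1$; since $r^*\geq 1>r_2$ and $p^{1-s}(1-p)^s\in(0,1)$, each factor $(p^{1-s}(1-p)^s)^{r^*-r_2}$ is at most $1$, so the sum is bounded by $\sum(1/(d(d-1)))^{r_2}=1$. Hence the maximum in Fraser's equation equals $1$ at $r^*$, and by strict monotonicity of the modified pressure one concludes $t=r^*$.

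The main obstacle is this final comparison in Case~1, where I must use simultaneously that $r^*\geq 1$ (from the lower bound on the affinity dimension in \Cref{cor:affinityLuroth}), that $r_2<1$ (guaranteed because $\pi_2(J)\subseteq \mathbb{N}_{\geq 3}$ with finite $J$ forces $\sum 1/(d(d-1))<1$), and that the weights $(1/(d(d-1)))^{r_2}$ happen to be normalized by the very defining equation of $r_2$. Once these structural observations are lined up, the verification of the ROSC, the identification of the coordinate projections, and the invocation of the two quoted propositions are all routine.
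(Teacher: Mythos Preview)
Your proposal is correct and follows essentially the same route as the paper: apply \Cref{theorem:FraserSimplified} after checking the ROSC and computing $r_1=1$ and $r_2$ in each case, then match Fraser's pressure equation with the affinity-dimension formula of \Cref{cor:affinityLuroth}. The only cosmetic difference is that in Case~1 the paper bounds the second term via monotonicity of $v_2$ from $v_2(r_2)=1$, whereas you bound the factor $(p^{1-s}(1-p)^s)^{r^*-r_2}$ directly using $r^*\ge 1\ge r_2$; both arguments are equivalent.
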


\begin{proof}
    If $\{ (0,2), (1,2) \} \subseteq J$, then 
    by construction and by \eqref{eq:dimension_1D_0_2_and_1_2}, $\dim_B(\pi_1(\mathcal F^p_J)) = \dim_B(\pi_2(\mathcal F^p_J)) = 1$, and thus the result is an application of \Cref{theorem:FraserSimplified} in combination with the second part of \Cref{cor:affinityLuroth}. Therefore, let us consider the case when $\pi_{2}(J) \subseteq \mathbb{N}_{\geq 3} $ and $\pi_1(J) = \{0,1\}$. Set $r_1 = \dim_B(\pi_1(\mathcal F^p_J))$ and $r_2 = \dim_B(\pi_2(\mathcal F^p_J))$. By the assumption $\pi_1(J) = \{0,1\}$ we have $r_1 = 1$, and by \eqref{eq:theorem:1DfiniteJ} together with the assumption $\pi_2(J)\subseteq \mathbb N_{\geq 3}$, which implies the OSC, $r_2$ uniquely solves $\sum_{(s,d)\in J} (\frac1{d(d-1)})^{r_2} = 1$. By \Cref{theorem:FraserSimplified} we have $\dim_B \mathcal F_J^p = \dim_P \mathcal F_J^p = r'$, where $r'$ solves
        \begin{align*} 
            \max\Bigg{\{}\underbrace{\sum_{(s,d)\in J} p^{1-s}(1-p)^s\left(\frac1{d(d-1)}\right)^{r' - 1}}_{= v_1(r')},\ \underbrace{\sum_{(s,d)\in J} (p^{1-s}(1-p)^s)^{r'-r_2}\left(\frac1{d(d-1)}\right)^{r_2} }_{=v_2(r')}\Bigg{\}} = 1.
        \end{align*}
    Next, we show that $r'= d(L_{s,d}^p\mid (s,d)\in J)$. For this, note that $v_1(r), v_2(r)$ are decreasing in $r$ and that 
        \begin{align*}
            v_2(r_2) = \sum_{(s,d)\in J} \left(\frac1{d(d-1)}\right)^{r_2} = 1.
        \end{align*}
    In particular, since $r_2\leq 1$, we have $v_2(r) \leq 1$ for all $r\geq 1$. Observe that the assumption $\pi_2(J)\subseteq \mathbb N_{\geq 3}$ implies $\sum_{(s,d)\in J} \frac{1}{d(d-1)} \leq \sum_{(s,d)\in \{0,1\}\times \mathbb{N}_{\geq 3}} \frac{1}{d(d-1)} = 1$. Thus, by the second part of \Cref{cor:affinityLuroth}, the number $r$ solving $v_1(r) = 1$ satisfies $r\geq 1$. Since both $v_1$ and $v_2$ are decreasing in $r$, we deduce that $v_1(r')\geq v_2(r')$ giving $v_1(r')=1$. Therefore, it follows from the second part of \Cref{cor:affinityLuroth} that $r' = d(L^p_{s,d} \ \vert \ (s,d)\in J)$.
\end{proof}

\begin{proof}[Proof of \Cref{theorem:2DF}]
    We begin by showing \eqref{eq:dimesion_of_F_J_p} and divide the argument into two cases, when $J$ is finite, and when $J$ is countably infinite.  To this end, let us assume that $J$ is finite and let $E = \left\{ \mathbf s \in \{0,1\}^\mathbb{N} :  \lim_{n \to \infty} \frac{\tau_0(\mathbf s,n)}{n}=p \right\}$, where $\tau_0$ is as in the proof of \Cref{cor:fibresforsequences}. We have seen in the proof of \Cref{cor:fibresforsequences} that for all $\mathbf s \in E$ the Hausdorff dimension of $F_{\!J,\mathbf s}$ does not depend on $\mathbf s \in E$ and is given by the unique $t$ solving \eqref{eq:prop_Bernoulli}. As in \Cref{s:sss}, let $\pi: \{0,1\}^\mathbb{N} \to [0,1]$ denote the projection map given by $\pi (\mathbf s) = \lim_{n \to \infty} f_{s_1}^p \circ \cdots \circ f_{s_n}^p(0)$. Observe that $\pi\vert_{E}: E \to \pi(E)$ is a bijection, and that $\mu_p(E)=1$ by the Birkhoff Ergodic Theorem. Let $\xi_p : [0,1]\rightarrow [0,1]$ be defined by
        \begin{align*}
            \xi_{\!p}(w)
                = \begin{cases}
                    \tfrac{w}{p} & \text{if} \; w\in [0,p],\\
                    \tfrac{w}{1-p} - \tfrac{p}{1-p} & \text{if} \; w\in (p,1],
                \end{cases}
        \end{align*}
    (so $f_0^p$ and $f_1^p$ are the local inverses of $\xi_p$). Let $\lambda$ denote the Lebesgue measure on the Borel $\sigma$-algebra of $[0,1]$ and denote the left-shift by $\sigma: \{0,1\}^{\mathbb{N}} \rightarrow \{0,1\}^{\mathbb{N}}$. The dynamical systems $([0,1],\lambda,\xi_p)$ and $(\{0,1\}^{\mathbb{N}}, \mu_p,\sigma)$ are measure theoretically isomorphic through the map $\pi: \{0,1\}^{\mathbb{N}} \rightarrow [0,1]$, see for instance \cite{MR1449135}, and thus $\lambda(\pi(E))=1$.

    For $w\in \pi(E)$, let $(\mathcal F_{\!J}^p)_w = \{x\in [0,1] : (w,x)\in \mathcal F_{\!J}^p\}$ be the vertical fibre of $\mathcal F_{\!J}^p$ based at $w$. Since there is a unique $\mathbf s = (s_k)_{k \in \mathbb{N}} \in \{0,1\}^\mathbb{N}$ with $\pi(\mathbf s)=w$ and each $(w,x)\in \mathcal F_{\!J}^p$ gives a signed Lüroth expansion of $x$ with digit sequence $((s_k,d_k))_{k\in\mathbb{N}} \in J^{\mathbb{N}}$ via $(w,x) = \lim_{k\rightarrow \infty} (A_{s_1,d_1}^p \circ \dots \circ A_{s_{k},d_{k}}^p)((0,0))$, we have $(\mathcal F_{\!J}^p)_w = F_{\!J,\mathbf s}$. Hence, \Cref{cor:fibresforsequences} implies $\dim_{\mathcal H}((\mathcal F_{\!J}^p)_w) = \dim_{\mathcal H}(F_{J,\pi^{-1}(w)}) = t$, for $\lambda$-almost all $w\in [0,1]$, where $t \in \mathbb{R}_{> 0}$ uniquely solves
        \begin{align*}
            \mathfrak{p}(I_0,I_1,t)
            = \left(\sum_{d_0\in I_0} \left(\frac{1}{d_0(d_0-1)}\right)^t\right)^p \left(\sum_{d_1\in I_1} \left(\frac{1}{d_1(d_1-1)}\right)^t\right)^{1-p}
            = 1.
        \end{align*}
    Since $t$ does not depend on $w$, and since this holds for all $w$ in a set of positive Lebesgue measure (and hence of Hausdorff dimension $1$), it is a direct consequence of \cite{Mar54} that $\dim_{\mathcal H}(\mathcal F_{\!J}^p) \geq 1 + t$, yielding \eqref{eq:dimesion_of_F_J_p}.

    Suppose that $J$ is countably infinite. For $n\in \mathbb{N}_{\geq2}$ and $s\in \{0,1\}$ let $I_{n,s} = I_s\cap \{2,\ldots,n\}$, and let $J_n$ denote the set $(\{0\}\times I_{n,0})\cup(\{1\}\times I_{n,1})$. Set $t_n \in \mathbb{R}_{> 0}$ to be the unique solution to $\mathfrak{p}(I_{0,n},I_{1,n},t_n) = 1$ and observe that $(t_n)_{n \in \mathbb{N}_{\geq 2}}$ is a non-decreasing sequence in $[0,1]$. This, in tandem with the fact that $\mathcal F_{\!J}^p \supseteq \mathcal F_{\!J_{n+1}}^p \supseteq \mathcal F_{\!J_{n}}^p$ for all $n \in \mathbb{N}_{\geq 2}$, yields $\dim_{\mathcal H}(\mathcal F_{\!J}^p) \geq \sup_{n\geq 2} \dim_{\mathcal H}(\mathcal F_{\!J_n}^p) \geq 1 + \sup_{n\geq 2} t_n$.

    Letting $t = \sup_{n\geq 2} t_n$, we observe for $n \in \mathbb{N}_{\geq 2}$ that $\mathfrak{p}(I_{0,n},I_{1,n},t) \leq 1$. Taking the limit as $n$ tends to infinity yields \eqref{eq:dimesion_of_F_J_p}. To conclude the proof, we show \eqref{eq:dimesion_of_F_J_p_2}. Since $I_0=I_1=I$, it holds by the second part of \Cref{cor:affinityLuroth} that
        \begin{align*}
            d(L^p_{s,d} \ \vert \ (s,d)\in J)
            &= \inf\left\{r \in (1,2] : \sum_{(s,d)\in J}p^{1-s}(1-p)^s\left(\tfrac{1}{d(d-1)}\right)^{r-1} \leq 1\right\}\\ 
            &= \inf\left\{r \in (1,2] : \sum_{d\in I}\left(\tfrac{1}{d(d-1)}\right)^{r-1} \leq 1\right\}\\
            &= 1 +  \inf\left\{r \in (0,1] : \sum_{d\in I}\left(\tfrac{1}{d(d-1)}\right)^{r} \leq 1\right\} \leq 2.
    \intertext{By \eqref{eq:dimesion_of_F_J_p}, the assumption $I_{0} = I_{1} = I$ also yields}
            \dim_{\mathcal H}(\mathcal F_{\!J}^p) &\geq 1 + \inf\left\{ r\in (0,1] :  \left(\sum_{d_0\in I} \left(\frac{1}{d_0(d_0-1)}\right)^r\right)^p \left(\sum_{d_1\in I} \left(\frac{1}{d_1(d_1-1)}\right)^r\right)^{1-p} \leq  1\right\}\\
            &= 1 + \inf\left\{ r\in (0,1] :  \sum_{d\in I} \left( \frac{1}{d(d-1)}\right)^r \leq  1\right\}
            = d(L^p_{s,d} \ \vert \ (s,d)\in J).
        \end{align*}
    This in tandem with \Cref{prop:lower bound}\eqref{item:prop:lower_bound_1} yields \eqref{eq:dimesion_of_F_J_p_2}.  Moreover, if $I$ is finite, then \Cref{lemma:modifiedLuroth} in combination with \Cref{prop:lower bound}\eqref{item:prop:lower_bound_1} gives that $\dim_{\mathcal H}(\mathcal F_{\!J}^p) = \dim_{P}(\mathcal F_{\!J}^p) = \dim_{B}(\mathcal F_{\!J}^p) = d(L^p_{s,d} \ \vert \ (s,d)\in J)$.
\end{proof}

\begin{proof}[Proof of \Cref{thm:dimension_spectrum}]
    Let $t\in [0,1]$ be chosen arbitrarily. For fixed $I \subseteq \mathbb{N}_{\geq 2}$ and $s \in \{0, 1\}$ the limit set of $\{\phi_{s,d} : d \in I \}$ equals $F_{\!\{s\}\times I}$ whereas that of $\{ A_{s, d}^{p} : d \in I \}$ equals $\{s\}\times F_{\!\{s\}\times I}$, meaning the two have equal Hausdorff dimensions.
    By \Cref{cor:DS_full_1D} we can find a set $I \subseteq \mathbb N_{\ge 2}$ such that $\dim_{\mathcal H} (\mathcal F_{\!\{0\}\times I}^p) = \dim_{\mathcal H} (\mathcal F_{\!\{1\}\times I}^p)=t$. With this at hand, \Cref{theorem:2DF,theorem:1DinfiniteJ} together with \Cref{cor:DS_full_1D} imply $\dim_{\mathcal H} (\mathcal F_{\!\{0,1\}\times I}^p) = 1+t$.
\end{proof}

\end{document}